\documentclass[11pt]{amsart}

\usepackage{amsmath,amssymb,amsthm}
\usepackage{comment}
\usepackage[unicode,breaklinks=true,colorlinks=true]{hyperref}
\usepackage[dvipsnames]{xcolor}
\newcommand{\WW}{\textcolor{blue}}

%\usepackage[top=1in, bottom=1in, left=0.2in, right=2.3in, marginparwidth=2in, marginparsep=0.1in]{geometry}

% no \snt mode
\usepackage[top=1in, bottom=1in, left=1in, right=1in, marginparwidth=1in, marginparsep=0.1in]{geometry}

\numberwithin{equation}{section}
\newtheorem{theorem}{Theorem}[section]
\newtheorem{proposition}[theorem]{Proposition}
\newtheorem{lemma}[theorem]{Lemma}
\newtheorem{definition}[theorem]{Definition}

\theoremstyle{remark}
\newtheorem{remark}[theorem]{Remark}

\definecolor{darkblue}{rgb}{0,0,0.7}

\newcommand{\crr}[1]{{\color[rgb]{1,0,0} #1}} % red
 % orange
 %magenta
 %sienna

\newcommand{\crp}[1]{{\color[rgb]{0.5,0,0.8} #1}}

\newcommand{\al}{\alpha}

\newcommand{\de}{\delta}
\newcommand{\e}{\epsilon}

\newcommand{\la}{\lambda}

\newcommand{\si}{\sigma}
\newcommand{\td}{\tilde}

\newcommand{\R}{{\mathbb R }}
\newcommand{\N}{{\mathbb N}}
\newcommand{\Z}{{\mathbb Z}}

\newcommand{\nb}{{\nabla}}

\newcommand{\I}{\infty}

\newcommand{\donothing}[1]{{}}

\newcommand{\EQ}[1]{\begin{equation}\begin{split} #1 \end{split}\end{equation}}
\newcommand{\EQN}[1]{\begin{equation*}\begin{split} #1 \end{split}\end{equation*}}

\makeatletter
\newcommand{\xRightarrow}[2][]{\ext@arrow 0359\Rightarrowfill@{#1}{#2}}
\makeatother

\newcommand{\loc}{\mathrm{loc}}

% reduce vertical space between bibitems
\let\OLDthebibliography\thebibliography
\renewcommand\thebibliography[1]{
  \OLDthebibliography{#1}
  \setlength{\parskip}{1pt}
  \setlength{\itemsep}{1pt plus 0.3ex}
}

% CONSTANTS

\title{Asymptotic stability for the 3D Navier-Stokes  {equations} in $L^3$ and nearby   spaces}
\author{Zachary Bradshaw and Weinan Wang}
\date{\today}

\begin{document}

\begin{abstract}
We provide a short proof of $L^3$-asymptotic stability around vector fields that are small in weak-$L^3$, including small Landau solutions. We show that asymptotic stability also holds for vector fields in the    range of Lorentz spaces strictly between $L^3$ and weak-$L^3$, as well as in the closure of the test functions in weak-$L^3$. To provide a comprehensive perspective on the matter, we observe that asymptotic stability of Landau solutions does not generally extend to weak-$L^3$ via a counterexample. 
\end{abstract}

\maketitle

\section{Introduction}

We consider the following perturbed version of the Navier-Stokes equations:
\begin{equation}\label{eq.PNS}
    \begin{cases}
        \partial_t u - \Delta u + u\cdot \nabla u +u\cdot \nabla U+U\cdot \nabla u+\nabla p=0\\
        \nabla \cdot u=0\\
        u(x,0)=u_0.
    \end{cases}
\end{equation}
where $U(x,t)\in L^\I(0,\I;L^{3,\I}  {(\mathbb R^3)})$ is divergence free with 
\[
\|U\|_{L^\I(0,\I;L^{3,\I} {(\mathbb R^3)})}\leq A<\I,
\]
for some $A$. We will also assume that $U\in C([0,\I); L^{3,\I} {(\mathbb R^3)})$. 
%where continuity at $t=0$ is understood in terms of $L^{3/2,1}$-$L^{3,\I}$ duality and at $t>0$ it is understood to be strong continuity in $L^{3,\I}$. 
Note that $L^{3,\I}$ denotes the weak-$L^3$ space. It is the endpoint space in the nested scale of Lorentz spaces $L^{3,q}$ in which $L^{3,3} = L^3$.
The Navier-Stokes equations, which model the motion of viscous incompressible fluids, are obtained from \eqref{eq.PNS} by setting $U=0$.
If $U$ and $V$ are themselves solutions to the Navier-Stokes equations, possibly supplemented with a common forcing term,  then their difference $u=U-V$ solves \eqref{eq.PNS}. 
It is therefore the correct context to study asymptotic stability which asks:  
\begin{quote}If $U$ is a given solution to the stationary Navier-Stokes equations which is perturbed by $u_0$ to obtain a solution  $V$ to the evolutionary Navier-Stokes equations, does  the solution   $u=U-V$ to \eqref{eq.PNS}  go to zero in some sense as $t\to \I$? 
\end{quote}
This  problem has been studied in a number of contexts. 
If $U$ is a Landau solution---i.e.~a $-1$-homogeneous jet-entrained solution to the stationary Navier-Stokes equations satisfying an exact formula---then $L^2$-asymptotic stability was shown in \cite{KarchPil} provided the Landau solution is small. Note that in this application, while the Landau solution $U$ as well as the perturbed solution are forced, their difference, which solves \eqref{eq.PNS}, is not forced as the forces cancel.   This was later extended to general vector fields like $U$ \cite{karch}. 
The $L^3$-asymptotic stability of Landau solutions  was introduced by Li, Zhang and Zhang in \cite{LZZ} where it is shown that, if $u_0$ is small enough in $L^3$ and the background Landau solution is also sufficiently small, then there exists a unique global strong solution to \eqref{eq.PNS} for which $\|u\|_{L^3}\to 0$ as $t\to \I$. This has been generalized  in  \cite{ZZ,ZhZh}. 

We will provide a new perspective on the $L^3$-asymptotic stability in  \cite{LZZ}. The most visible difference in our work will be that $U$ is not restricted to the class of Landau solutions. Indeed, it can be any prescribed divergence free vector field satisfying the conditions below \eqref{eq.PNS} and does not need to satisfy any PDE. This relaxation is not merely academic as it will simplify the argument for asymptotic stability.  Relaxing the conditions on $U$ will necessitate a new treatment of the term $u\cdot \nabla U+U\cdot \nabla u$ because we cannot use Morrey's inequality as is done in \cite{LZZ}.  
A benefit of our approach is its flexibility which allows us to explore  asymptotic stability beyond $L^3$   by formulating our results for data in the Lorentz spaces $L^{3,q}$ where $3<q<\I$ and  data in the closure of the test functions under the $L^{3,\I}$ quasinorm. These spaces include progressively rougher data as evidenced by the chain of embeddings,
\[
L^3\subsetneq L^{3,3<q<\I} \subsetneq \overline{C_c^\I}^{L^{3,\I}}\subsetneq L^{3,\I}.
\] 
To round things out, we show that there exist initial perturbations $u_0$ of Landau solutions   in $L^{3,\I}$ which do not converge to the Landau solution in $L^{3,\I}$, regardless of how small the initial perturbation is.

Our first theorem concerns the well-posedness of \eqref{eq.PNS}.

\begin{theorem}[Global well-posedness]\label{thrm.globalWell}
    	 Let $u_0\in L^{3,q}$ with $3\leq q\leq \infty$ be divergence free. Let $U$ be given, also divergence free with $U\in C([0,\I); L^{3,\I})$\footnote{Note that inclusion in $C([0,\I);L^{3,\I})$ is understood to mean  strong continuity for $t>0$ and continuity in terms of $L^{3/2,1}$-$L^{3,\I}$ duality at $t=0$.} with 
      \[
      \sup_{0\leq t<\I} \| U\|_{L^{3,\I}}<A<\I.
      \]
      There exist $\e_1$ and $\e_2$ so that, if $A<\e_1$ and $\|u_0\|_{L^{3,q}}<\e_2$, then there exists a unique $u\in C([0,\I);L^{3,q})$ which solves \eqref{eq:MILDNS} and satisfies
		\[
		\|u \|_{L^\I(0,\I;L^{3,q})} 
 % + \sup_{3 < p < \I}\sup_{0<t<\I} t^{1/2-3/(2p)}\|u\|_{L^p}(t) 
  \leq C\| u_0 \|_{L^{3,q}},
		\]
  for a universal constant $C$.
\end{theorem}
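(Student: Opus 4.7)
My approach is a Picard iteration for the mild (Duhamel) form of \eqref{eq.PNS},
\[
u(t) = e^{t\Delta}u_0 + B(u,u)(t) + B(u,U)(t) + B(U,u)(t),\qquad B(f,g)(t) := -\int_0^t e^{(t-s)\Delta}\mathbb{P}\nabla\cdot(f\otimes g)(s)\,ds,
\]
with $\mathbb{P}$ the Leray projector. Since the ambient norm $L^\I_t L^{3,q}$ does not by itself close under the Navier-Stokes nonlinearity, I would work in a Kato-type Banach space combining $L^{3,q}$ with an auxiliary weighted-in-time $L^{p,q}$ component for some $p\in(3,\I)$:
\[
\|u\|_K := \sup_{t>0}\|u(t)\|_{L^{3,q}} + \sup_{t>0} t^{\al}\|u(t)\|_{L^{p,q}}, \qquad \al := \tfrac{1}{2} - \tfrac{3}{2p}.
\]
Standard Lorentz-space heat-semigroup estimates immediately yield $\|e^{t\Delta}u_0\|_K \leq C\|u_0\|_{L^{3,q}}$.

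\textbf{Core estimates and fixed point.} The analytic heart of the argument is the pair of bounds
\[
\|B(u,v)\|_K \leq C_0\|u\|_K\|v\|_K,\qquad \|B(u,U)\|_K + \|B(U,u)\|_K \leq C_0 A\|u\|_K.
\]
The first is a classical Kato bilinear estimate: combine H\"older in Lorentz spaces ($L^{p,q}\cdot L^{p,q}\hookrightarrow L^{p/2, q/2}$), the heat-Leray smoothing $\|e^{\tau\Delta}\mathbb{P}\nabla h\|_{L^{3,q}}\lesssim \tau^{-3/p}\|h\|_{L^{p/2,q/2}}$, and a Beta-function integration in time, with an analogous computation for the $L^{p,q}$ component. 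The second, linear-in-$U$ bound rests on the mixed-norm H\"older $L^{3,\I}\cdot L^{p,q}\hookrightarrow L^{r,q}$ with $1/r = 1/3 + 1/p$ together with the corresponding heat-Leray smoothing; because the resulting time-singularity sits at the scale-critical threshold, one may need to enrich $K$ with a second auxiliary component (a weighted $L^{p',q}$ norm with $p'>p$) so that the $L^{p,q}$ component of $\|B(U,u)\|_K$ closes without logarithmic loss. With both estimates in hand, the Picard map $T(u) := e^{t\Delta}u_0 + B(u,u) + B(u,U) + B(U,u)$ satisfies $\|T(u)\|_K \leq C\|u_0\|_{L^{3,q}} + C_0\|u\|_K^2 + 2 C_0 A\|u\|_K$ with a matching Lipschitz bound. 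Picking $\e_1, \e_2$ small enough that $2CC_0 \e_2 + 2C_0\e_1 < \tfrac12$, the map $T$ is a strict contraction on the ball $\{\|u\|_K \leq 2C\|u_0\|_{L^{3,q}}\}$, producing the unique fixed point $u\in K$ with $\|u\|_{L^\I_t L^{3,q}} \leq \|u\|_K \leq 2C\|u_0\|_{L^{3,q}}$.

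\textbf{Continuity and main obstacle.} Continuity $u \in C([0,\I); L^{3,q})$ reduces to (i) continuity of $t\mapsto e^{t\Delta}u_0$, which is standard for $q<\I$ by density of smooth data and is interpreted in the weak-$\ast$ $L^{3/2,1}$-$L^{3,\I}$ duality sense at $t=0$ for $q=\I$, as specified in the footnote; and (ii) continuity of the Duhamel integrals, which follows from the bilinear/linear bounds applied to $B(u,\cdot)(t)-B(u,\cdot)(t')$ together with dominated convergence. Uniqueness within the fixed-point ball is built into the contraction, and uniqueness in the larger class $C([0,\I);L^{3,q})$ follows from a standard persistence-of-regularity argument. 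The decisive difficulty is the linear-in-$U$ estimate: because $U$ lies only in $L^{3,\I}$ with no additional decay or integrability, the mixed H\"older $L^{3,\I}\cdot L^{p,q}\hookrightarrow L^{r,q}$ is the only product estimate available, and the resulting time kernels sit at the critical exponent $\tau^{-1}$ for the $L^{p,q}$ component, so the Kato space must be tuned carefully (possibly with multiple auxiliary weights) to avoid a divergence. It is precisely this step that replaces the Morrey-inequality shortcut used in \cite{LZZ} and that enables the removal of the Landau-solution requirement on $U$.
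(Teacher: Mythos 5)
The broad architecture of your proposal (mild formulation, Picard iteration in a Kato-type space combining $L^\I_t L^{3,q}_x$ with a time-weighted auxiliary component, smallness on $A$ and on $\|u_0\|_{L^{3,q}}$) matches the paper's. You also correctly diagnose the central obstacle: when you estimate $B(U,u)$ purely via H\"older $L^{3,\I}\cdot L^{p,q}\hookrightarrow L^{r,q}$ with $1/r=1/3+1/p$, the resulting time kernel in the auxiliary component is the non-integrable $(t-s)^{-1}$. However, the fix you propose — ``enrich $K$ with a second auxiliary component (a weighted $L^{p',q}$ norm with $p'>p$)'' — does not close the argument, and this is a genuine gap. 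Concretely: to bound the $L^{p,q}$ component of $B(U,u)$ with a subcritical kernel you must use $\|u\|_{L^{p',q}}$ with $p'>p$ on the right (e.g.\ $p'=2p$ makes the kernel $(t-s)^{-1+3/(4p)}$); but then the $L^{p',q}$ component of $B(U,u)$ cannot be bounded by the $L^{p,q}$ or $L^{p',q}$ component (the first requires $p'<p$, the second again produces $(t-s)^{-1}$), so it needs a $p''>p'$, and so on. No finite collection of auxiliary norms terminates this chain. The paper handles this by taking the supremum over the \emph{entire} family $K_p$, $3<p<\I$, with a $p^{-1}$ weight built into the norm to absorb the growth of the Oseen/Young constants, and closes the loop by estimating $\|\cdot\|_{K_p}$ in terms of $\|\cdot\|_{K_{2p}}$.

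The second, and more decisive, difference is that the paper does not rely on H\"older against $L^{3,\I}$ at all for the linear-in-$U$ term. Instead it introduces a time-dependent threshold decomposition
\[
U = U_{\mathrm{low}} + U_{\mathrm{high}}, \qquad U_{\mathrm{high}} = U\,\chi_{\{|U|\ge \delta/\sqrt{t}\}},
\]
so that $\|U_{\mathrm{low}}(\cdot,t)\|_{L^\I}\le \delta/\sqrt{t}$ (which trades the missing integrability for a harmless $s^{-1/2}$ power) while $U_{\mathrm{high}}(\cdot,t)\in L^{\bar r}$ for any $\bar r<3$ with a quantitative bound via $|\{|U|\ge\delta/\sqrt{t}\}|\lesssim (\sqrt{t}/\delta)^3 A^3$. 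This is what defuses the critical singularity and gives the needed smallness factor (in $\delta$ for the low part, in $A$ for the high part). Your proposal has no analogue of this splitting, and as written the linear estimate would not close. Finally, you treat $q=\I$ together with $q<\I$, but the paper points out that when $q=\I$ the Kato scheme is unnecessary (and the approximation-by-smooth-data arguments for time continuity at $t=0$ fail); the paper instead invokes Meyer's bicontinuity result on $L^\I_t L^{3,\I}_x\cap C_t L^{3,\I}_x$ to dispatch this endpoint directly, and this is worth flagging as a separate case in your writeup.
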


We prove this using a modification of Kato's algorithm. To do this we first formulate a fixed point theorem tailored to the structure of \eqref{eq.PNS}. We then establish integral estimates  for the terms containing $U$ by splitting $U$ into a large-scale and small-scale part.  Ultimately, this leads to a mild solution of the form
\[u(x,t)=e^{t\Delta}u_0 - \int_{0}^{t}e^{(t-s)\Delta}\mathbb P\nabla\cdot (u \otimes u)\,ds
-
\int_{0}^{t}e^{(t-s)\Delta}\mathbb P(u\cdot \nabla U+U\cdot \nabla u)\,ds.\]
In essence, we are extending Kato's result and approach, which is for $U=0$, to a generalized version of the Navier-Stokes equations, \eqref{eq.PNS}, where $U\neq 0$ is small.  As will be visible in our proof, when $q<\I$ it suffices to have $U\in L^\I([0,\I);L^{3,\I})$; in particular, continuity is not needed. We include the continuity assumption as it allows us to outsource the proof of the $q=\I$ case to \cite{Meyer}.

When $q=3$ and $U$ is a small Landau solution, this result was proven in \cite{LZZ}. Let us briefly compare our approach to that of \cite{LZZ}. In \cite{LZZ}  the linear operator $\mathcal L v= -\Delta v +\mathbb P( v\cdot \nb U +U\cdot\nb v)$ is studied independently and a semigroup theory is developed for $e^{t\mathcal L}$.  Then, \eqref{eq.PNS} is formulated as an integral equation via the formula
\[
u(x,t)= e^{t\mathcal L}u_0 - \int_0^t e^{(t-s)\mathcal L}\mathbb P \nb \cdot (u\otimes u)\,ds.
\] 
This is essentially viewing the nonlinear  problem \eqref{eq.PNS} as a perturbation of $\partial_t v+\mathcal L v=0$.
Our approach avoids   the semigroup theory for $e^{t\mathcal L}$ by viewing \eqref{eq.PNS} as a perturbation of the heat equation.  

%Since we have generalized to $U$ which are not just Landau solutions, we do not know that $|U|\lesssim |x|^{-1}$ and so cannot use Morrey's inequality to bound the terms involving $U$. Instead, we decompose $U$ into a bounded part and a part with compact support---this decomposition follows from the definition of $L^{3,\I}$---and use these properties to close our estimates.

In \cite{LZZ}, data in either $L^p$ for some $3<p<\I$ or in $L^3$  with large norm are also considered and local well-posedness established. Since our primary interest is asymptotic stability, which is not meaningful for time-local solutions, we do not pursue these results but note they can be derived from our fixed-point theorem following Kato's argument.

\bigskip 

Our main asymptotic stability result is as follows.

\begin{theorem}[Asymptotic stability]\label{thrm.asymptoticStability}
    Suppose $3\leq q\leq \I$. For $U$,  $u_0$   and $u$ as in Theorem \ref{thrm.globalWell} but with $A\leq \e_1/2$ and $\|u_0\|_{L^{3,q}}\leq \e_2/2$ and with the extra assumption that $u_0\in \overline{C_{c,\si}^\I}^{L^{3,\I}}$ when $q=\I$, we have 
    \[
    \lim_{t\to \I}\|u \|_{L^{3,q}}(t)= 0.
    \]
\end{theorem}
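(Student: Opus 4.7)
The plan is to show $L := \limsup_{t \to \I} \|u(t)\|_{L^{3,q}} = 0$ via a shift-and-bootstrap argument on the mild formulation, with decay of the linear heat evolution on a dense subclass as the starting input.

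First I prove $\|e^{t\Delta} u_0\|_{L^{3,q}} \to 0$ as $t \to \I$. For $\phi \in C_{c,\si}^\I$ the heat-kernel bound $\|e^{t\Delta}\phi\|_{L^3} \lesssim t^{-1/2}\|\phi\|_{L^{3/2}}$ yields decay in $L^3$, which transfers to $L^{3,q}$ via the embedding $L^3 \hookrightarrow L^{3,q}$. Combined with contractivity of $e^{t\Delta}$ on $L^{3,q}$, an $\e/3$ approximation argument then extends the conclusion to every $u_0$ in the $L^{3,q}$-closure of $C_{c,\si}^\I$. For $3\leq q<\I$ this closure is all of $L^{3,q}$; for $q=\I$ it is precisely the subspace imposed in the hypothesis.

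The bootstrap uses a shift: for any $T \geq 0$, $v(\tau) := u(T+\tau)$ is again a mild solution to \eqref{eq.PNS} on $[0,\I)$ with background $U(\cdot, T+\cdot)$ (which satisfies the same hypotheses as $U$) and initial data $u(T)$. Since $\|u(T)\|_{L^{3,q}} \leq C\|u_0\|_{L^{3,q}} \leq C\e_2/2$, choosing $\e_2$ small enough lets Theorem \ref{thrm.globalWell} apply to $v$ and yields $\|v\|_X \leq C\|u(T)\|_{L^{3,q}}$ in the fixed-point space $X$. Given $\delta>0$, pick $T = T_\delta$ so large that $\|u(s)\|_{L^{3,q}} \leq L + \delta$ for all $s \geq T$, and use the shifted mild formula
\[
u(t) = e^{(t-T)\Delta} u(T) - \int_T^t e^{(t-s)\Delta}\mathbb{P}\bigl[\nabla\cdot(u\otimes u)+u\cdot\nabla U+U\cdot\nabla u\bigr]\,ds \qquad (t\geq T).
\]
The first term tends to $0$ in $L^{3,q}$ as $t\to\I$ by the linear-decay step, provided $u(T)$ lies in the appropriate closure; for $q = \I$ this reduces to showing that the subspace $\overline{C_{c,\si}^\I}^{L^{3,\I}}$ is invariant under the solution map, which I would obtain by approximating $u_0$ with $C_{c,\si}^\I$ data (whose associated solutions lie in $L^3 \cap L^\I$, hence in the closure) and invoking the Lipschitz stability of the solution map implicit in Theorem \ref{thrm.globalWell}. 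The tail integral, by the bilinear and linear-in-$U$ estimates from the fixed-point proof of Theorem \ref{thrm.globalWell} applied to $v$, is bounded by $C\|v\|_X^2 + CA\|v\|_X \leq C(L+\delta)^2 + CA(L+\delta)$.

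Letting $t\to\I$ and then $\delta \to 0$ gives $L \leq CL^2 + CAL$. With $L \leq C\e_2/2$ and $A \leq \e_1/2$, choosing $\e_1,\e_2$ small enough that $C(L+A) < 1$ forces $L = 0$. The main obstacle I anticipate is the first term in the decomposition when $q = \I$: $L^{3,\I}$ is non-separable and $e^{t\Delta}$ is not strongly continuous there, so its asymptotic decay hinges on verifying invariance of $\overline{C_{c,\si}^\I}^{L^{3,\I}}$ under the solution map, which must be established before the linear-decay step can be invoked in this endpoint case.
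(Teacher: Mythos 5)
Your shift-and-bootstrap argument is a genuinely different route from the paper, and it goes through cleanly when $3\leq q<\I$. The paper instead decomposes $u_0=\td u_0 + \bar u_0$ with $\td u_0\in C_{c,\si}^\I$ (in particular $L^2$) and $\bar u_0$ small in $L^{3,q}$, solves for $\bar u$ by Theorem \ref{thrm.globalWell}, and then realizes the $L^2$-piece $\td u$ as a Karch--Pilarczyk--Schonbek weak solution of \eqref{eq.PNS} with background $U+\bar u$; the $L^2$-asymptotic stability of \cite{karch}, weak-strong uniqueness, and a Sobolev/interpolation step supply a late time $t_0$ at which $\|\td u\|_{L^3}(t_0)$ is small, after which Theorem \ref{thrm.globalWell} is applied a second time. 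Your route is more self-contained (it avoids the $L^2$ theory, weak-strong uniqueness, and interpolation), at the cost of needing a sharper linear-in-$U$ bound: the paper only records $\|B(u,U)\|_X+\|B(U,u)\|_X\leq \tfrac18\|u\|_X$ after tuning the splitting parameter $\delta$ against $A$, whereas your contradiction $L\leq CL^2+CAL$ with $C(L+A)<1$ needs the $A$-dependent form. This is recoverable: optimizing the $U_{high}/U_{low}$ split as $\delta\sim \sqrt A$ yields $\|B(u,U)\|_X\lesssim \sqrt A\,\|u\|_X$, and one further shrinks $\e_1,\e_2$ so that the accumulated constants in the self-improving inequality land below $1$. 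With this bookkeeping the $q<\I$ case closes.

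The gap is precisely at the $q=\I$ endpoint, where you flag it, and it is more substantial than your sketch suggests. Your bootstrap needs $u(T)\in \overline{C_{c,\si}^\I}^{L^{3,\I}}$ at a late time $T$, i.e.\ invariance of the closure under the solution map. You propose to get this by approximating $u_0$ by $u_0^{(n)}\in C_{c,\si}^\I$ and asserting that the associated solutions $u^{(n)}$ lie in $L^3\cap L^\I$ and hence in the closure. That assertion does not come from Theorem \ref{thrm.globalWell}: for $q=\I$ the existence theorem runs through Meyer's framework and returns a solution only in $C([0,\I);L^{3,\I})$, with no Kato-class or $L^3$ information; and the $q=3$ version of Theorem \ref{thrm.globalWell} cannot be invoked for the approximants, because $\|u_0^{(n)}\|_{L^3}$ need not be small (and is typically unbounded along the sequence when $u_0\notin L^3$). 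Obtaining $u^{(n)}(t)\in L^3$ would require a separate propagation-of-integrability argument showing that $L^3$-membership of the datum persists for mild solutions that are merely small in $L^{3,\I}$; this is nontrivial for the perturbed system \eqref{eq.PNS} and is not supplied. The paper sidesteps the invariance question entirely: it uses density of $C_{c,\si}^\I$ in the closure only at time zero, and hands the long-time behavior of the $L^2$-piece to Theorem \ref{thrm.karch} rather than tracking membership in the closure forward in time.
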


Our proof of asymptotic stability  re-formulates $L^{3}$-asymptotic stability  in terms of $L^2$-asymptotic stability as studied by Karch et.~al.~\cite{karch}. In that paper, it is shown that
$L^2$-perturbations around vector fields like $U$, e.g., uniformly small in $L^{3,\I}$, are asymptotically stable.
Our observation is that, if we start with $U$ and perturb it by something small in $L^3$, call it $v$, then the perturbation $U+v$ can be written as $(U+V)+ u$ where $u$ is still small in $L^3$ but \textit{is also in an energy class} while $U+V$ is still small in $L^{3,\I}$---this has the form of the solutions for which $L^2$-asymptotic stability is proven in \cite{karch}.  It follows that $\|\nb u\|_{L^2}(t_k)\to 0$  for some sequence $t_k\to \I$. By a Sobolev embedding, we   have that the $L^6$ norm of $u$ is small at some time and, by interpolation, so is the $L^3$ norm. This means we can make the $L^3$ norm of $V+u$ as small as we like at a particular large time which depends on how small we want $V+u$ to be. Applying Theorem \ref{thrm.globalWell} at this time implies that the solution remains small at all later times. This leads to asymptotic stability.  A splitting argument also appears  in the proof of convergence in \cite{LZZ} (which is reminiscent of Calderon's \cite{Calderon}; see also \cite[p. 259]{LR}) but we note that our result is streamlined by the relaxation of Theorem \ref{thrm.globalWell} to velocities other than Landau solutions. In particular, when we split $U+v$ into $(U+V)+u$, we can use Theorem \ref{thrm.globalWell} to solve for $u$ instead of having to construct it by hand as in \cite{LZZ}.

This argument can be extended to the Lorentz spaces $L^{3,q}$ when $q<\I$ because the closure of $C_c^\I$ under the $L^{3,q}$ norm is all of $L^{3,q}$. By definition, this property also holds in $\overline{C_c^\I}^{L^{3,\I}}$. This however fails in general when $q=\I$ meaning that we cannot decompose $v$ into $V+u$ as in the above picture.  As justified in the following theorem, this failure cannot be avoided.

\begin{theorem}[Asymptotic stability fails in $L^{3,\I}$]\label{thrm.excluded}
    Let $U = u_L(x)$ be a  Landau solution which satisfies the size requirement in Theorem \ref{thrm.globalWell}, i.e.~$\|u_L\|_{L^{3,\I}} <\e_1$. For any $\e\in (0,\e_2)$, there exists $u_0\in L^{3,\I}$ for which $\|u_0\|_{L^{3,\I}}< \e$ so that  
    \[
    \limsup_{t\to \I }\|u \|_{L^{3,\I}} >0,
    \] 
    where $u$ is the solution to \eqref{eq.PNS} referenced in Theorem \ref{thrm.globalWell}.
\end{theorem}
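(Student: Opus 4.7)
The approach I would take exploits the scaling invariance of \eqref{eq.PNS} when $U=u_L$ is a Landau solution. Since $u_L$ is $(-1)$-homogeneous, the rescaling $u(x,t)\mapsto\lambda u(\lambda x,\lambda^2 t)$ preserves both the equation and the background. Thus, by picking $u_0$ that is itself $(-1)$-homogeneous, the uniqueness clause in Theorem \ref{thrm.globalWell} will force the solution to be forward self-similar, and the scale-invariance of the $L^{3,\infty}$ quasinorm then makes $\|u(\cdot,t)\|_{L^{3,\infty}}$ constant in $t$---hence it cannot decay to zero.

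Concretely, the first step is to exhibit, for each prescribed $\e \in (0,\e_2)$, a nontrivial divergence-free, $(-1)$-homogeneous vector field $w_0$ on $\mathbb R^3$ with $\|w_0\|_{L^{3,\I}}<\e$; such fields exist in abundance (e.g.\ take the curl of a smooth $0$-homogeneous vector field away from the origin and multiply by a small scalar). Set $u_0 = w_0$ and apply Theorem \ref{thrm.globalWell} (with $A<\e_1$) to produce the unique mild solution $u\in C([0,\I);L^{3,\I})$. The second step is the self-similarity argument: for each $\lambda>0$ set $u_\lambda(x,t) = \lambda u(\lambda x,\lambda^2 t)$. A direct computation shows $u_\lambda$ solves \eqref{eq.PNS} with rescaled background $\lambda u_L(\lambda x) = u_L(x)$ and rescaled initial data $\lambda u_0(\lambda x) = u_0(x)$, and $\|u_\lambda\|_{L^\I(0,\I;L^{3,\I})} = \|u\|_{L^\I(0,\I;L^{3,\I})}$ by scale invariance. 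The uniqueness assertion of Theorem \ref{thrm.globalWell} then yields $u_\lambda \equiv u$, so $u(x,t) = t^{-1/2}\phi(x/\sqrt{t})$ for the profile $\phi := u(\cdot,1)$.

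The third step is to conclude. Scale invariance of the $L^{3,\I}$ quasinorm gives $\|u(\cdot,t)\|_{L^{3,\I}}=\|\phi\|_{L^{3,\I}}$ for every $t>0$, and $\phi$ cannot vanish identically, for otherwise self-similarity would give $u\equiv 0$, contradicting $u(\cdot,0)=w_0 \neq 0$ (which is imposed in the weak-$*$ duality sense from the footnote of Theorem \ref{thrm.globalWell}, and is preserved through the mild formulation by $e^{t\Delta}w_0$). Consequently
\[
\limsup_{t\to\I}\|u(\cdot,t)\|_{L^{3,\I}} = \|\phi\|_{L^{3,\I}} > 0,
\]
as required. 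In particular, this is consistent with the failure of $w_0$ to lie in $\overline{C_{c,\si}^\I}^{L^{3,\I}}$, matching the hypothesis that is imposed in Theorem \ref{thrm.asymptoticStability} precisely to avoid this obstruction.

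The only delicate point---and the main obstacle I expect---is justifying that $u_\lambda$ actually lies in the uniqueness class of Theorem \ref{thrm.globalWell}. The $L^\I_t L^{3,\I}_x$ bound and the small-norm condition are automatic from scale invariance, but one must also verify that the rescaling preserves the strong continuity for $t>0$ and the $L^{3/2,1}$-$L^{3,\I}$ duality continuity at $t=0$. Both are routine consequences of scale invariance of the norm and the fact that $e^{t\Delta}w_0$ is the right linear approximation near $t=0$, but they must be checked carefully to invoke uniqueness; once this is done, the rest of the argument is immediate.
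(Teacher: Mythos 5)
Your proposal is correct and follows essentially the same route as the paper: exploit scale invariance of \eqref{eq.PNS} around a Landau background plus uniqueness from Theorem \ref{thrm.globalWell} to force self-similarity, then use scale invariance of the $L^{3,\infty}$ quasinorm to rule out decay. The only cosmetic difference is that you take fully $(-1)$-homogeneous initial data (constructed, e.g., as the curl of a $0$-homogeneous field), whereas the paper takes \emph{discretely} self-similar data built by $\lambda$-DSS extension of a compactly supported divergence-free field on an annulus, following Bradshaw--Tsai; both produce arbitrarily small nontrivial data in $L^{3,\infty}$ and yield the same contradiction with decay, and the paper itself notes that the fully self-similar version is the ``simple'' case underlying its DSS argument.
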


 In other words, asymptotic stability around Landau solutions fails for some initial perturbations in $L^{3,\I}$ regardless of how small the Landau solution or the initial perturbation are in $L^{3,\I}$ and, therefore, Theorem \ref{thrm.asymptoticStability} cannot be generalized to  $L^{3,\I}$. Of course, Theorem \ref{thrm.globalWell} implies the perturbed solution is  stable in that it remains within a finite distance in $L^{3,\I}$ of the Landau solution, provided the initial difference is small. The initial perturbations we use in the theorem are scaling invariant. Classically, for the Navier-Stokes equations, if small-data global well-posedness holds in a class admitting self-similar initial data, e.g.~in $L^{3,\I}$, then the global solution associated with a sufficiently small self-similar initial datum is itself self-similar. Since the $L^{3,\I}$ norm of a self-similar solution is independent of time, it cannot go to zero. Because Landau solutions are self-similar, the same argument applies to the \textit{perturbed} Navier-Stokes equations \eqref{eq.PNS} around a Landau solution $U$.

\bigskip 
\noindent \textbf{Organization:}   Section 2 contains definitions and preliminary ideas. Section 3 contains the fixed point argument and the proof of Theorem \ref{thrm.globalWell}. Theorems \ref{thrm.asymptoticStability} and \ref{thrm.excluded} are proven in Section \ref{sec:AS}.

\section{Definitions and preliminaries}

First, we define  Lorentz spaces. 
		\begin{definition}\label{lorentz}
			For a measurable function $f: \Omega \rightarrow \mathbb R$, we define:
		\begin{equation*}
			d_{f,\Omega}(\alpha):=
			|\{x\in \Omega: |f(x)|>\alpha\}|
			.
		\end{equation*}
			Then, the Lorentz spaces $L^{p,q}(\Omega)$ with $1\leq p<\infty$, $1\leq q \leq \infty$ is the set of all functions $f$ on $\Omega$ such that the quasi-norm $\|f\|_{L^{p,q}(\Omega)}$ is finite and
			\begin{equation*}
			\|f\|_{L^{p,q}(\Omega)}
			:=
			\left(p\int_{0}^{\infty}\alpha^{q} d_{f,\Omega}(\alpha)^{\frac{q}{p}}\frac{d\alpha}{\alpha}\right)^{1/q}
			\end{equation*}
   \begin{equation*}
			\|f\|_{L^{p,\infty}(\Omega)}
			:=
			\sup_{\alpha>0} \alpha d_{f,\Omega}(\alpha)^{1/p}
			.
			\end{equation*}
		\end{definition}
		The space $L^{p, \infty}$ coincides with  weak-$L^p$. We also have    $\|f\|_{L^{p,p}(\Omega)}=\|f\|_{L^{p}(\Omega)}$ and $L^{p,q_1}(\Omega) \subset L^{p, q_2} (\Omega)$ whenever $1 \leq q_1 \leq q_2 \leq \infty$, with the embedding being continuous.

The following is a standard heat semigroup estimate \cite[Proposition 3.2]{HN}.
  \begin{proposition}[Heat estimate]\label{heat}
      Let $1<p\leq r<\infty$ and $1< q\leq \infty$, then
      \[
      \|e^{t\Delta}f\|_{L^{p_1, q}(\R^n)}
		\lesssim
		t^{ -\frac{n}{2}(\frac{1}{p_2}-\frac{1}{p_1})}
		\|f\|_{L^{p_2, q}(\R^n)}.
      \]
  \end{proposition}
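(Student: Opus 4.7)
The plan is to deduce the Lorentz-space heat estimate from Young's inequality for convolutions in Lorentz spaces, combined with a scaling computation of the Lorentz norm of the Gaussian kernel. Write $e^{t\Delta}f = G_t \ast f$ where $G_t(x) = (4\pi t)^{-n/2} e^{-|x|^2/(4t)}$. I would first record O'Neil's version of Young's inequality, namely that for exponents satisfying $1 + 1/p_1 = 1/r + 1/p_2$ with $1 < p_2 \leq p_1 < \infty$ and $1 < q \leq \infty$, one has
\[
\|G_t \ast f\|_{L^{p_1, q}(\R^n)} \lesssim \|G_t\|_{L^{r,1}(\R^n)}\, \|f\|_{L^{p_2, q}(\R^n)}.
\]
This is the critical input and, while standard, is the one step I would cite rather than reprove.

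Next I would estimate $\|G_t\|_{L^{r,1}}$ by scaling. Since $G_t(x) = t^{-n/2} G_1(x/\sqrt t)$, the change of variables $y = x/\sqrt t$ gives the distribution function identity $d_{G_t}(\alpha) = t^{n/2}\, d_{G_1}(t^{n/2}\alpha)$, and hence by Definition \ref{lorentz},
\[
\|G_t\|_{L^{r,1}(\R^n)} = t^{-\frac{n}{2}(1 - 1/r)} \|G_1\|_{L^{r,1}(\R^n)}.
\]
The norm $\|G_1\|_{L^{r,1}}$ is finite for every $1 < r < \infty$ because $G_1$ is bounded and rapidly decaying, so its decreasing rearrangement $G_1^\ast(s)$ is comparable to $e^{-c s^{2/n}}$, which is integrable against $s^{1/r - 1}$. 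Substituting and using $1 - 1/r = 1/p_2 - 1/p_1$ yields
\[
\|G_t\|_{L^{r,1}(\R^n)} \lesssim t^{-\frac{n}{2}(\frac{1}{p_2} - \frac{1}{p_1})},
\]
and combining with the Young-type inequality above gives exactly the desired bound.

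The main (minor) obstacle is justifying the Lorentz-Young inequality with the third index equal to $q$ on both sides; the strongest form, due to O'Neil, delivers the appropriate combination of second indices via the $L^{r,1}$ norm on the kernel side, which is precisely what the Gaussian supplies. Since the paper is invoking \cite[Proposition 3.2]{HN} as a black box and the proposition is used only to control linear terms in the subsequent fixed-point argument, I would not belabor the endpoint behavior in $q$; a one-line citation of O'Neil plus the scaling computation above is enough to make the proof self-contained.
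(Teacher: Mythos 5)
Your argument is correct, and it fills in a proof the paper does not actually give: in the source, Proposition~\ref{heat} is simply cited from \cite[Proposition~3.2]{HN} and used as a black box. Your route---factoring $e^{t\Delta}$ as convolution with the Gaussian $G_t$, applying O'Neil's convolution inequality in the form $L^{r,1}\ast L^{p_2,q}\to L^{p_1,q}$ with $1+1/p_1=1/r+1/p_2$, and extracting the $t$-power from the exact scaling $G_t(x)=t^{-n/2}G_1(x/\sqrt t)$ of the kernel---is the standard self-contained proof and is exactly what such a reference would contain. Two small points worth noting. First, the scaling identity $d_{G_t}(\alpha)=t^{n/2}d_{G_1}(t^{n/2}\alpha)$ together with the $L^{r,1}$ definition gives $\|G_t\|_{L^{r,1}}=t^{-\frac{n}{2}(1-1/r)}\|G_1\|_{L^{r,1}}$, and since $1-1/r=1/p_2-1/p_1$ this is precisely the claimed power, so your computation is clean. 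Second, when $p_1=p_2$ the auxiliary exponent degenerates to $r=1$, which is excluded by the hypotheses $1<p_1,p_2,r<\infty$ of the paper's own Lemma~\ref{Lemma_young}; in that case you should just invoke classical Young $\|G_t\ast f\|_{L^{p,q}}\le\|G_t\|_{L^1}\|f\|_{L^{p,q}}$ with $\|G_t\|_{L^1}=1$ (indeed the exponent on $t$ vanishes). Apart from flagging that corner case, the proof is complete and would serve as a self-contained replacement for the citation.
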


The next lemma is Young's convolution inequality in Lorentz spaces. It is also known as ``O'Neil's convolution inequality'' and a variation on what originally appeared as  \cite[Theorem 2.6]{o1963convolution}.  {We use the version in Blozinski \cite[Theorem 2.12]{Blo} which characterizes the constants more precisely than in \cite{o1963convolution}.  
 
\begin{lemma}[Young's convolution inequality in Lorentz spaces, \cite{Blo}]
\label{Lemma_young}
		Suppose $f \in L^{p_1, q_1} (\R^3)$, $g \in L^{p_2, q_2} (\R^3)$ with $1 < p_1, p_2, r < \infty$ and $0 < q_1, q_2, s \leq \infty$,
		\begin{equation*}
	1/r+1=1/p_{1}+1/p_{2}
			\quad
			\text{and}
			\quad
			1/s\leq 1/q_{1}+1/q_{2}
		\end{equation*}
		Then $f *g \in L^{r,s}(\R^3)$ and 
		\begin{equation*}
		\|f*g\|_{L^{r,s}(\R^3)}
			\leq
			 C(r, q_1, q_2, s) \|f\|_{L^{p_{1},q_{1}}(\R^3)}
			\|g\|_{L^{p_{2},q_{2}}(\R^3)},
		\end{equation*}
  where
  \begin{equation}\label{e.w09161}
      C(r, q_1, q_2, s)
      \begin{cases}
          = O (r(\alpha^{\frac{1}{\alpha}-\frac{1}{s}})),~\text{if }1/\alpha=1/q_1 + 1/q_2, s\geq 1
          \\
          \leq 
          O(2^{s/r}-1)^{-1/s}(\alpha^{\frac{1}{\alpha}-\frac{1}{s}}),~\text{if }
          1/\alpha=1/q_1 + 1/q_2, 0<s<1.
      \end{cases}
  \end{equation}
	\end{lemma}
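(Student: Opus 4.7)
The plan is to reduce to nonnegative functions, invoke the classical O'Neil rearrangement inequality, and then rewrite the Lorentz quasi-norms as weighted $L^s$-norms of decreasing rearrangements, tracking constants through sharp Hardy and Hölder estimates.

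By replacing $f$ and $g$ with $|f|$ and $|g|$ (using $|f * g| \leq |f| * |g|$), one may assume $f, g \geq 0$. Denote by $f^*, g^*$ the decreasing rearrangements on $(0,\I)$ and by $f^{**}(t) = t^{-1}\int_0^t f^*(s)\,ds$ the associated maximal function. O'Neil's rearrangement inequality asserts
\[
(f * g)^{**}(t) \leq t\, f^{**}(t)\, g^{**}(t) + \int_t^\I f^*(s) g^*(s)\,ds.
\]
Combined with the two-sided comparison $\|f*g\|_{L^{r,s}} \approx \|t^{1/r}(f*g)^{**}(t)\|_{L^s(dt/t)}$, the proof reduces to bounding each of the two terms on the right in this weighted $L^s$-norm.

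For the first term, the scaling relation $1/p_1 + 1/p_2 = 1 + 1/r$ rewrites $t \cdot t^{1/r} = t^{1/p_1} \cdot t^{1/p_2}$, after which H\"older's inequality in $L^s(dt/t)$ with exponents $q_1, q_2$ (using $1/s \leq 1/q_1 + 1/q_2$) separates the integrand into $\|t^{1/p_1} f^{**}\|_{L^{q_1}(dt/t)}$ and $\|t^{1/p_2} g^{**}\|_{L^{q_2}(dt/t)}$. Each factor is controlled by $\|f\|_{L^{p_1,q_1}}$ and $\|g\|_{L^{p_2,q_2}}$ respectively via Hardy's inequality, which is where the condition $p_i > 1$ is essential. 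For the second term, a dual Hardy inequality exchanges $\int_t^\I f^*(s) g^*(s)\,ds$ for $t f^*(t) g^*(t)$ up to an explicit constant, after which the same H\"older bookkeeping applies and reproduces the previous estimate.

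The main obstacle is tracking constants sharply, especially in the regime $0 < s < 1$, where the triangle inequality in $L^s$ fails and one must instead use the quasinorm inequality $\|a + b\|_{L^s}^s \leq \|a\|_{L^s}^s + \|b\|_{L^s}^s$ together with the corresponding refinement of Hardy's inequality; this is where the $(2^{s/r}-1)^{-1/s}$ factor in the second branch of \eqref{e.w09161} enters, while for $s \geq 1$ the ordinary triangle inequality yields the cleaner $O(r)$ factor. The remaining $\alpha^{1/\alpha - 1/s}$ factor encodes the sharp constant from the H\"older step with $1/\alpha = 1/q_1 + 1/q_2$, consistent with the computation in \cite{Blo}.
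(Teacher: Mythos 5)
The paper does not prove Lemma~\ref{Lemma_young}; it quotes Blozinski's Theorem~2.12 (\cite{Blo}) as a black box, so there is no ``paper proof'' to compare your sketch against. Your outline follows the classical O'Neil rearrangement route, which is indeed the standard way to establish such inequalities, but there is a real gap in the constant bookkeeping, and the constant is precisely what this version of the lemma is for.

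The issue is the first O'Neil term, $t\,f^{**}(t)g^{**}(t)$. After the H\"older split you are left with $\|t^{1/p_i}f^{**}\|_{L^{q_i}(dt/t)}$, and passing back to $\|t^{1/p_i}f^*\|_{L^{q_i}(dt/t)}=\|f\|_{L^{p_i,q_i}}$ via Hardy's inequality carries the constant $p_i'=p_i/(p_i-1)$. Under the convolution constraint $1/p_1+1/p_2=1+1/r$ with $p_1,p_2>1$, one has $p_i\in(1,r)$ and $p_i'\to\infty$ as $p_i\to1^+$; in particular $p_1'p_2'$ is not $O(r)$. The lemma as stated asserts $C=C(r,q_1,q_2,s)$ with no dependence on $p_1,p_2$, and this is essential to how it is used: in Remark~\ref{Remark.oseen} the authors take $p_1=3p/(3+2p)\to 1$ as $p\to 3^+$ while claiming the overall constant is $O(p)$, which would be destroyed by an unbounded extra factor of $p_1'$. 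The dual-Hardy estimate on the second term $\int_t^\infty f^*g^*$ does deliver the clean $O(r)$ you want, so the defect is confined to the $f^{**}$-term. To close the gap you would need either a form of O'Neil's inequality that bounds $(f*g)^*$ directly by expressions in $f^*,g^*$ (avoiding the Hardy step on that term), or Blozinski's actual, more careful tracking. Similarly, the $(2^{s/r}-1)^{-1/s}$ factor for $0<s<1$ is asserted rather than derived in your sketch. As it stands, your argument proves an inequality of the right shape but with a strictly worse constant than quoted, and that is not sufficient for the role this lemma plays in the paper.
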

 
\begin{remark}\label{Remark.oseen}
    By the preceding inequality it is easy to see that, letting $K$ be the kernel of the Oseen tensor we have  for $3<p<\I$ and $C=C(p, \infty)=O(p(p^{\frac{1}{p}-\frac{1}{p}}))=O(p)$   (in other words, we use the first case of \eqref{e.w09161} with $r=s=q_1=\alpha=p \geq 1$),
\[\label{e.w9162}
\| D^\al \mathbb P e^{t \Delta}  f \|_{L^p} \leq C {p} \| D^\al K(\cdot ,t) \|_{L^{3p/(3+2p),p}} \|f \|_{L^{3,\I }}\leq C {p} \| D^\al K(\cdot ,t) \|_{L^{3p/(3+2p)}} \|f \|_{L^{3,q }},% \leq C t^{-|\al|/2 - 3(1/3-1/p)/2}\|f \|_{L^{3,q}},
\]
where  $\al$ is a multi-index in $\N_0^n$ and we have used the embeddings $L^{r,s} \subset L^{r,s'}$ for $s'>s$ twice, noting that $3p/(3+2p)<p$ for all $p$.
It follows that 
\[
\| D^\al \mathbb P e^{t \Delta}  f \|_{L^p} \leq  C   {p} t^{-|\al|/2 - 3(1/3-1/p)/2}\|f \|_{L^{3,q}},
\]
where the constant depends on $|\al|$.   
\end{remark}

%See \url{https://mathoverflow.net/questions/234579/hardy-littlewood-sobolev-inequality-in-lorentz-spaces}

\begin{comment} 
	\begin{lemma}[O'Neil's inequality]\label{Lemma_young}
		Suppose $f \in L^{r_1, s_1} (\R^3)$, $g \in L^{r_2, s_2} (\R^3)$ with $1 < r_1, r_2, r < \infty$, $0 < s_1, s_2, s \leq \infty$,
		\begin{equation*}
	1/r+1=1/r_{1}+1/r_{2}
			\quad
			\text{and}
			\quad
			1/s= 1/s_{1}+1/s_{2}
		\end{equation*}
		Then $f *g \in L^{r,s}(\R^3)$ and 
		\begin{equation*}
		\|f*g\|_{L^{r,s}(\R^3)}
			\leq
			 C(r_1,r_2,s_1,s_2) \|f\|_{L^{r_{1},s_{1}}(\R^3)}
			\|g\|_{L^{r_{2},s_{2}}(\R^3)}
		\end{equation*}
	\end{lemma}
Note that in O'Neil's original paper the constant depends on $r$, which is not desirable in our application. 

\begin{remark}\label{Remark.oseen}
    By the preceding inequality it is easy to see that, letting $K$ be the kernel of the Oseen tensor we have  for $3<p<\I$,
\[
\| D^\al \mathbb P e^{t \Delta}  f \|_{L^p} \leq C \| D^\al K(\cdot ,t) \|_{L^{3p/(3+2p),p}} \|f \|_{L^{3,\I }}\leq C \| D^\al K(\cdot ,t) \|_{L^{3p/(3+2p)}} \|f \|_{L^{3,q }},% \leq C t^{-|\al|/2 - 3(1/3-1/p)/2}\|f \|_{L^{3,q}},
\]
where  $\al$ is a multi-index in $\N_0^n$ and we have used the embeddings $L^{r,s} \subset L^{r,s'}$ for $s'>s$ twice, noting that $3p/(3+2p)<p$ for all $p$.
It follows that 
\[
\| D^\al \mathbb P e^{t \Delta}  f \|_{L^p} \leq  C t^{-|\al|/2 - 3(1/3-1/p)/2}\|f \|_{L^{3,q}},
\]
where the constant depends on $|\al|$ but can be taken independent of $p$.

%where $p^{-1}+1 = r^{-1}+1/3$ and where $\al$ is a multi-index in $\N_0^n$. 

\end{remark}
\end{comment}

\subsection{The weak solutions of Karch et. al. \cite{karch}}

In a series of papers \cite{KarchPil,karch}, Karch and Pilarczyk, along with Schonbek in \cite{karch},   establish asymptotic stability for a class of weak solutions generalizing the Leray-Hopf weak solutions for Navier-Stokes to the perturbed Navier-Stokes equations.  We recall the following definition from \cite{karch}.
\begin{definition}Let $u_0\in L^2$ and $T\in (0,\I]$.
    A vector field $u$ is a weak solution to \eqref{eq.PNS} on $\R^3\times [0,T]$ if it  satisfies \eqref{eq.PNS} in a weak sense (see \cite[Def.~2.6]{karch} for the precise definition of this) and belongs to the space
    \[C_w^\I([0,T];L^2_\si (\R^3 ) )\cap L^2((0,T]; \dot H^1(\R^3)),\]
    where $L^2_\si$ is the closure  of divergence free test functions in $L^2$. 
\end{definition}

We will use the following theorem of Karch et.~al., which is \cite[Theorem 2.7]{karch}.

\begin{theorem}[$L^2$-asymptotic stability]\label{thrm.karch}For every $u_0\in L^2_\si$, $U$ as given below \eqref{eq.PNS} and each $T>0$, the problem \eqref{eq.PNS} has a weak solution $u$ for which the strong energy inequality 
\[
\| u(t)\|_{L^2}^2 + 2 (1- A K ) \int_s^t \|\nb u \|_{L^2}^2\,ds\leq \| u (s)\|_{L^2}^2,
\]
holds for almost every $s\geq 0$ (including $s=0$) and every $t\geq s$,
where $K$ is a universal constant and we are assuming $1-AK>0$ (this amounts to a smallness condition on $A$).  Furthermore we have 
\[
\lim_{t\to \I} \| u(t)\|_{L^2}^2 = 0 .
\]
\end{theorem}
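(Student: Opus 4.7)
\bigskip
\textbf{Proof proposal.} My plan is the classical three-step Leray-style argument: (i) construct weak solutions via Galerkin approximation; (ii) extract the strong energy inequality as a weak limit of the Galerkin identity; (iii) deduce $L^2$ decay from the energy inequality via a compactness argument. The first two steps are routine; the third is the main obstacle.

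For step (i), let $u_n = P_n u$ where $P_n$ is the $L^2$-projection onto the first $n$ elements of a smooth divergence-free Schauder basis of $L^2_\si$. Testing the Galerkin truncation of \eqref{eq.PNS} against $u_n$, the divergence-free condition eliminates $\bka{u_n\cdot\nb u_n,\,u_n}$ and $\bka{U\cdot\nb u_n,\,u_n}$, leaving
\[
\frac{1}{2}\frac{d}{dt}\|u_n\|_{L^2}^2 + \|\nb u_n\|_{L^2}^2 = -\bka{u_n\cdot\nb U,\,u_n} = \bka{u_n\cdot\nb u_n,\,U},
\]
where the second equality uses integration by parts and $\nb\cdot u_n=0$. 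By Lorentz duality, Hölder in Lorentz spaces, and the sharp Sobolev embedding $\dot H^1(\R^3)\hookrightarrow L^{6,2}(\R^3)$,
\[
\abs{\bka{u_n\cdot\nb u_n,\,U}} \leq \|U\|_{L^{3,\I}}\|u_n\|_{L^{6,2}}\|\nb u_n\|_{L^2} \leq KA\|\nb u_n\|_{L^2}^2
\]
for a universal $K$. Under $1-KA>0$ this yields the uniform-in-$n$ bound
\[
\|u_n(t)\|_{L^2}^2 + 2(1-KA)\int_0^t\|\nb u_n\|_{L^2}^2\,ds \leq \|u_0\|_{L^2}^2,
\]
and standard Aubin--Lions compactness produces a subsequential limit $u$ that is a weak solution on $[0,T]$ for every $T>0$.

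For step (ii), the Galerkin identity is an equality; weak lower semicontinuity of $\|\cdot\|_{L^2}$ and $\|\nb\cdot\|_{L^2_{t,x}}$ passes it to the stated inequality at $s=0$ in the limit. Upgrading to almost every starting time $s$ is standard: strong right-continuity holds on the full-measure set of Lebesgue points of $t\mapsto\|u(t)\|_{L^2}^2$, and the argument is re-run from each such $s$.

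Step (iii) is where I expect the real work. The energy inequality gives that $\|u(t)\|_{L^2}^2$ is eventually non-increasing to a limit $L\geq 0$, while $\int_0^\I\|\nb u\|_{L^2}^2\,ds<\I$ forces a sequence $t_k\to\I$ along which $\|\nb u(t_k)\|_{L^2}\to 0$ and hence $\|u(t_k)\|_{L^6}\to 0$ by Sobolev. To rule out $L>0$ I would argue by compactness following Masuda--Kato: extract a subsequence along which $u(\cdot+t_k,\cdot)$ converges weakly in $L^2_\loc((0,\I);L^2)$ to some $\bar u$, together with a weak-$*$ limit $\bar U$ of the shifted backgrounds $U(\cdot+t_k,\cdot)$ in $L^\I_t L^{3,\I}_x$, and show that $\bar u$ is a weak solution of \eqref{eq.PNS} (with background $\bar U$) having constant-in-time $L^2$-energy $L$. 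The strong energy inequality applied to $\bar u$ then forces $\nb\bar u\equiv 0$, hence $\bar u\equiv 0$, contradicting $L>0$. The chief technical difficulty is passing the coupling terms $u\cdot\nb U+U\cdot\nb u$ safely to the limit in distribution; this is precisely where the critical scaling of $U$ in $L^{3,\I}$ together with the smallness of $A$ and the same Lorentz--Hölder plus Sobolev machinery used in step (i) are essential.
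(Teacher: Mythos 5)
The paper does not prove this statement; the authors cite it verbatim as Theorem 2.7 of Karch, Pilarczyk, and Schonbek, and the only internal commentary is a footnote identifying the estimate $\int f\cdot\nabla U\,g\,dx \le K\|U\|_{L^{3,\infty}}\|\nabla f\|_{L^2}\|\nabla g\|_{L^2}$ as the source of the constant $K$. Your steps (i) and (ii) reproduce precisely the energy mechanism that estimate encodes --- Lorentz--H\"older together with the sharp Sobolev embedding $\dot H^1(\mathbb R^3)\hookrightarrow L^{6,2}(\mathbb R^3)$, absorbed under $1-AK>0$ --- and, modulo the routine point that the SEI at almost every $s>0$ is obtained from strong $L^2$ convergence of the Galerkin sequence at a.e.\ time rather than by ``re-running from each $s$'', they are sound.

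Step (iii), however, has a genuine gap. The compactness scheme you sketch does not close on the whole space: Aubin--Lions only upgrades the weak limit of the translates $u(\cdot+t_k)$ to strong convergence in $L^2_{\mathrm{loc}}$ in space, so the limit $\bar u$ satisfies merely $\|\bar u(t)\|_{L^2}^2 \le L$, not $=L$. Meanwhile $\int_0^\infty\|\nabla u\|_{L^2}^2\,dt<\infty$ and lower semicontinuity already force $\nabla\bar u\equiv 0$ and hence $\bar u\equiv 0$, so reaching ``$\bar u\equiv 0$'' contradicts nothing --- the residual mass $L$ may simply have escaped to spatial infinity. This is exactly the obstruction that makes rate-free $L^2$ decay on $\mathbb R^3$ nontrivial (it is visible even for the heat semigroup, whose $L^2$ norm decays by spreading, not by local collapse). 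Masuda's classical resolution supplements the energy picture with the Duhamel representation from a good time $s$, exploiting heat-semigroup decay of $e^{(t-s)\Delta}u(s)$ and smallness of $\int_s^\infty\|\nabla u\|_{L^2}^2$ for $s$ large; Karch et al.\ instead run Schonbek's Fourier splitting, which controls the low-frequency part of the $L^2$ mass directly. Either ingredient is what your step (iii) is missing, and without one of them the argument as written does not establish the decay claim.
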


We will also need a weak-strong uniqueness result which connects the solutions we construct in Theorem \ref{thrm.globalWell} to those in Theorem \ref{thrm.karch}.

\begin{theorem}[Weak-strong uniqueness]\label{thrm.weakstronguniqueness}
    Suppose $u_0\in L^{3,q}\cap L^2_\si$ and is small in $L^{3,q}$ as required by Theorem \ref{thrm.globalWell}. Let $u$ denote the global weak solution in Theorem \ref{thrm.karch}. Let $v$ denote the global strong solution in Theorem \ref{thrm.globalWell}. Then $u=v$. 
\end{theorem}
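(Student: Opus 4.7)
The plan is to run a classical Serrin-type weak-strong uniqueness argument, using the strong solution $v$ as a test function in the weak formulation of $u$ and controlling the difference $w=u-v$ in $L^2$. The smallness of $v$ and $U$ in $L^{3,\I}$ will be essential for closing the energy estimate at the critical scaling.

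First I would upgrade the regularity of $v$. Since $u_0\in L^2_\si$ in addition to $L^{3,q}$, I would re-run the fixed-point argument underlying Theorem \ref{thrm.globalWell} in the energy norm. Using the mild representation
\[
v(t)=e^{t\De}u_0-\int_0^t e^{(t-s)\De}\mathbb P\nb\cdot(v\otimes v)\,ds-\int_0^t e^{(t-s)\De}\mathbb P(v\cdot\nb U+U\cdot\nb v)\,ds,
\]
the heat estimates in Proposition \ref{heat} and Remark \ref{Remark.oseen}, and the smallness of $\|v\|_{L^\I L^{3,q}}$ and $\|U\|_{L^\I L^{3,\I}}$, a standard contraction places $v$ in $C([0,\I);L^2)\cap L^2_{\loc}(0,\I;\dot H^1)$. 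Parabolic smoothing then yields enough interior regularity for $v$ to be used as a test function and for $v$ to satisfy an energy identity.

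Next I would set $w=u-v$, which inherits $w(0)=0$ and the regularity $w\in C_w([0,\I);L^2)\cap L^2_{\loc}(0,\I;\dot H^1)$. Combining the strong energy inequality for $u$ from Theorem \ref{thrm.karch} with the energy identity for $v$, and adding the cross-term identity obtained by pairing the equation for $u$ with $v$ (and vice versa), one arrives at
\[
\|w(t)\|_{L^2}^2+2(1-AK)\int_0^t\|\nb w\|_{L^2}^2\,ds\leq -2\int_0^t\int_{\R^3}\bigl[(w\cdot\nb v)+(w\cdot\nb U)\bigr]\cdot w\,dx\,ds,
\]
since divergence-free-ness eliminates $\int(u\cdot\nb w)\cdot w$ and $\int(U\cdot\nb w)\cdot w$. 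Integrating by parts and applying Hölder and Sobolev in Lorentz spaces, for $f=v$ or $f=U$,
\[
\Bigl|\int_{\R^3}(w\cdot\nb f)\cdot w\,dx\Bigr|=\Bigl|\int_{\R^3}(w\cdot\nb w)\cdot f\,dx\Bigr|\leq C\|w\|_{L^{6,2}}\|\nb w\|_{L^2}\|f\|_{L^{3,\I}}\leq C\|f\|_{L^{3,\I}}\|\nb w\|_{L^2}^2.
\]
Shrinking $\e_1,\e_2$ in Theorem \ref{thrm.globalWell} so that $C(\|v\|_{L^\I L^{3,\I}}+\|U\|_{L^\I L^{3,\I}})<1-AK$, the gradient term on the left absorbs the right-hand side entirely; since $w(0)=0$ this forces $w\equiv 0$.

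The main obstacle is the first step, namely upgrading the critical-scaling solution $v$ from $L^\I_t L^{3,q}$ to the energy class $L^\I_t L^2\cap L^2_t\dot H^1$ and producing a genuine energy identity. In principle this is a standard bootstrap from the mild formula plus parabolic regularization, but one has to be careful that the perturbation terms $v\cdot\nb U+U\cdot\nb v$ — which in Theorem \ref{thrm.globalWell} were handled by splitting $U$ into small- and large-scale parts — also close in the energy norm. Once that regularity is secured, the remainder of the proof is the classical Prodi-Serrin scheme, with the cubic term controlled uniformly by the critical-space smallness of $v$ and $U$.
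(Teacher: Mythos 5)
Your proposal is correct and follows essentially the same route the paper sketches: a classical Prodi--Serrin weak-strong argument in which the strong solution is placed in the local energy class and the only new ingredient is absorbing the terms $\int (w\cdot\nb f)\cdot w\,dx$ (with $f\in\{v,U\}$) into the dissipation via $\bigl|\int (w\cdot\nb f)\cdot w\,dx\bigr|\lesssim \|f\|_{L^{3,\I}}\|\nb w\|_{L^2}^2$ and the smallness of $\|v\|_{L^{3,\I}}$, $\|U\|_{L^{3,\I}}$. The paper's proof sketch cites Tsai's book for the standard machinery and singles out exactly this estimate as the sole modification, so your write-up simply makes explicit the steps the authors leave implicit.
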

\begin{proof}[Proof sketch] The details of this sort of proof are well known when $q=3$---see, e.g., \cite[Theorem 4.4]{Tsai-book}.  The only modification here is the use of the estimate\footnote{Note that this is the genesis of the constant $K$ appearing in Theorem \ref{thrm.karch}.} 
\[
\int  f\cdot \nb U g \,dx    \leq K \| U \|_{L^{3,\I}} \| \nb f\|_{L^2} \| \nb g\|_{L^2}.
\]
In the context of a typical weak-strong uniqueness proof, this shows up when bounding 
\[
\int w\cdot\nb v w\,dx \leq K \| v\|_{L^{3,\I}} \| \nb w\|_{L^2}^2,
\] 
where $w=u-v$. 
By taking $ \| v\|_{L^{3,\I}}\lesssim K^{-1}$, which amounts to a smallness condition in Theorem \ref{thrm.globalWell}, formal energy estimates can be closed.  When $3\leq q<\I$, this argument still applies because $L^{3,q}$ embeds continuously in $L^{3,\I}$.
\end{proof}

\section{A fixed point argument} 

Recall the following fixed point theorem: If $E$ is a Banach space and $B: E\times E \to E$ is a bounded bilinear transform satisfying \EQ{
\| B(e,f)\|_{E}\leq C_B \| e\|_E\|f\|_E,
}
and if $\|e_0\|_{E}\le \varepsilon \le (4C_B)^{-1}$, then the equation $e = e_0 - B(e,e)$ has a solution with $\| e\|_{E}\leq 2 \varepsilon$ and this solution is unique in $\overline B(0,2\varepsilon)$. We make use of the following linear perturbation of this.

\begin{proposition}\label{prop.fp}
If $E$ is a Banach space and $B: E\times E \to E$ is a bounded bilinear transform satisfying \EQ{\label{contraction}
\| B(e,f)\|_{E}\leq C_B \| e\|_E\|f\|_E,
}
and if $\|e_0\|_{E}\le \varepsilon\le (4C_B)^{-1}$, and  $U$ is given and satisfies,
\EQ{\label{contraction2}
\| B(e,U)\|_{E} + \| B(U,e)\|_{E}\leq \frac 1 {8} \|e\|_E,
}
then the equation $e = e_0 - B(e,e)-B(U,e)-B(e,U)$  has a solution with $\| e\|_{E}\leq 3 \varepsilon/2$ and this solution is unique in $\overline B(0,3\varepsilon/2)$.
\end{proposition}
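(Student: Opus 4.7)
The plan is to apply the Banach contraction mapping principle to the map $T \colon E \to E$ defined by $T(e) := e_0 - B(e,e) - B(U,e) - B(e,U)$, restricted to a closed ball $\overline{B}(0,R) \subset E$ whose radius $R$ is a fixed multiple of $\varepsilon$, e.g.\ $R = 3\varepsilon/2$. This mirrors the classical bilinear Kato-type Picard scheme recalled immediately above the statement, except that the extra first-order terms $B(U,e) + B(e,U)$ must be absorbed into the contraction coefficient using the smallness hypothesis \eqref{contraction2}.

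The first step is to verify self-mapping. For $\|e\|_E \leq R$, the triangle inequality combined with \eqref{contraction} and \eqref{contraction2} gives
\[
\|T(e)\|_E \leq \|e_0\|_E + \|B(e,e)\|_E + \|B(U,e) + B(e,U)\|_E \leq \varepsilon + C_B R^2 + \tfrac{1}{8} R.
\]
The smallness assumption $\varepsilon \leq (4 C_B)^{-1}$ forces $C_B R \ll 1$, so the quadratic contribution $C_B R^2$ is of lower order than the linear ones and the right-hand side is controlled by $R$; this is precisely the reason the coefficient $1/8$ in \eqref{contraction2} is paired with the threshold $(4 C_B)^{-1}$ for $\varepsilon$.

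The second step is the contraction property. Expanding bilinearly,
\[
T(e) - T(f) = -B(e-f, e) - B(f, e-f) - B(U, e-f) - B(e-f, U),
\]
so \eqref{contraction} and \eqref{contraction2} yield
\[
\|T(e) - T(f)\|_E \leq \bigl( 2 C_B R + \tfrac{1}{8}\bigr) \|e-f\|_E.
\]
Since $R$ is a fixed multiple of $\varepsilon$ with $C_B \varepsilon \leq 1/4$, the factor in parentheses is bounded by a universal constant strictly less than one, making $T$ a strict contraction on $\overline{B}(0,R)$. Banach's fixed point theorem then supplies a unique $e \in \overline{B}(0, 3\varepsilon/2)$ with $T(e) = e$, which is exactly the sought solution of $e = e_0 - B(e,e) - B(U,e) - B(e,U)$.

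I do not expect any substantial obstacle: the argument is a routine linear perturbation of the standard bilinear Picard lemma. The only genuinely delicate point is the simultaneous balancing of the inhomogeneous datum $e_0$, the quadratic nonlinearity $B(e,e)$, and the linear perturbation $B(U,\cdot) + B(\cdot,U)$ within a single ball; the thresholds $\varepsilon \leq (4 C_B)^{-1}$ and $\|B(U,\cdot) + B(\cdot,U)\|_{E\to E} \leq 1/8$ are calibrated precisely to make this bookkeeping go through. Uniqueness in $\overline{B}(0,3\varepsilon/2)$ is automatic from the contraction, so no separate argument is required.
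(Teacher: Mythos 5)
Your proof takes essentially the same route as the paper's: the paper runs an explicit Picard scheme $e_n = e_0 - B(e_{n-1},e_{n-1}) - B(U,e_{n-1}) - B(e_{n-1},U)$, derives a uniform bound on the iterates and then a Cauchy estimate; you package the identical estimates as an invocation of Banach's fixed-point theorem on $\overline{B}(0,3\varepsilon/2)$. Your contraction coefficient $2C_BR + \tfrac18 \le \tfrac34 + \tfrac18 = \tfrac78$ is exactly the factor the paper obtains when bounding $\|e_{n+1}-e_n\|_E$, so the contraction halves of the two arguments coincide.

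The one genuine defect is that your self-mapping step is asserted rather than computed, and as stated it does not close. With $R = \tfrac32\varepsilon$ and $C_B\varepsilon \le \tfrac14$ one gets
\[
\|T(e)\|_E \le \varepsilon + C_B R^2 + \tfrac18 R \le \varepsilon + \tfrac{9}{16}\varepsilon + \tfrac{3}{16}\varepsilon = \tfrac{7}{4}\varepsilon,
\]
which exceeds $R = \tfrac32\varepsilon$, so $T$ does not map $\overline{B}(0,\tfrac32\varepsilon)$ to itself; the sentence claiming the output is ``controlled by $R$'' papers over precisely the point that needs checking. You are in good company: the paper verifies $\|e_1\|_E \le \tfrac32\varepsilon$ from $\|e_0\|_E \le \varepsilon$ but merely asserts the inductive step from $\|e_{n-1}\|_E \le \tfrac32\varepsilon$ to $\|e_n\|_E \le \tfrac32\varepsilon$, which fails by the same arithmetic. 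The remedy is trivial and does not affect anything downstream — tighten the threshold to $\varepsilon \le (8C_B)^{-1}$, or shrink the $\tfrac18$ in \eqref{contraction2}, or enlarge the ball slightly — but in a proof by the contraction mapping principle the self-map verification is not optional, and you should display it and adjust the constants so it actually holds.
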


\begin{proof}
One just sets up a Picard scheme with 
\[
e_n = e_0 -B(e_{n-1},e_{n-1})-B(U,e_{n-1}) -B(e_{n-1},U).
\]
We have 
\[
\|e_1\|_E \leq \varepsilon + \frac 1 4 \varepsilon    + \frac 1 8 \varepsilon \leq \frac 3 2 \varepsilon.
\] 
In principle, $C_B$ is large and, in particular, we assume $C_B\geq 1$. By induction, if $e_{n-1}$ satisfies the bound written above for $e_1$ then 
\EQ{
\|e_n\|_E &\leq \frac 3 2 \varepsilon 
}
and, therefore, for all $n\in \N$,
\[
\|e_n\|_E\leq \frac 3 {2 } \varepsilon.
\]
We have also that 
\[
e_{n+1}-e_{n}=-B(e_{n},e_n) -B(U,e_n)-B(e_n,U) + B(e_{n-1},e_{n-1})+B(U,e_{n-1})+B(e_{n-1},U).
\]
It follows that 
\EQ{
\|e_{n+1}-e_n\|_E
&\leq C_B\| e_{n+1}\|_E\|e_{n}-e_{n-1}\|_E +C_B\| e_{n}\|_E\|e_{n}-e_{n-1}\|_E
        +\frac 1 {4} \|e_{n}-e_{n-1}\|_E
\\&\leq \left( \frac 3 4 +\frac 1 8 \right)\|e_{n}-e_{n-1}\|_E.
}
This implies the sequence is Cauchy and therefore has a limit  $e$ in $E$ and  $\|e\|_E\leq 3\varepsilon/2$. The uniform bounds and continuity of the bilinear operator guarantee that $e = e_0 - B(e,e)-B(U,e)-B(e,U)$. If $f$  satisfies $ f= e_0 - B(f,f)-B(U,f)-B(f,U)$ and $\|f\|_E\leq 7/(16C_B)$, then 
\[
\|e-f\|_E\leq \frac {7}{8} \|e-f\|_E,
\]
implying $e$ is unique.
\end{proof}

In what follows we will apply this to the operator
\[
B (u,v)= -\int_0^t e^{(t-s)\Delta}\mathbb P \nb \cdot (u\otimes v)\,ds.
\]
where $\mathbb P$ is the Leray projector. 
 Applying the Leray projection to \eqref{eq.PNS} as well as Duhamel's principle results in the following mild formulation of \eqref{eq.PNS},
\EQ{\label{eq:MILDNS}
u(x,t)&=e^{t\Delta}u_0 - \int_{0}^{t}e^{(t-s)\Delta}\mathbb P\nabla\cdot (u \otimes u)\,ds
-
\int_{0}^{t}e^{(t-s)\Delta}\mathbb P(u\cdot \nabla U+U\cdot \nabla u)\,ds
\\&=
e^{t\Delta}u_0+B(u,u)+B(u,U)
+B(U,u).
}

%where $\mathbb P$ is the Leray projector and the preceding display acts as our definition of the bilinear form $B$. 

	 \begin{comment}
\crr{REMOVE I THINK}	We first start with a lemma in the Lorentz setting. Note that in the following lemma, when $p=q$, we recover Lemma 6.2 in \cite{BT}.
	\begin{lemma}\label{l.w03091}
		If $E \subset \mathbb R^n$, $|E|<\infty$, $1\leq p<\infty$, $1<q<r<\infty$. Then, $L^{r, \infty}(E)\subset L^{p,q}(E)$, and 
		\EQN{
			\|f\|_{L^{p,q}}^{q}
			\lesssim
			\|f\|_{L^{r,\infty}}^{q} |E|^{-\frac{q}{p}(\frac{q}{r}-1)}
		}
	\end{lemma}
	\begin{proof}
		Let $\omega(t)=\omega_{f, E}(t)=|\{x \in E:|f(x)|>t\}|$, the distribution function of $|f|$ on $E$. Let $M=\|f\|_{L^{q, \infty}(E)}$. We have $\omega(t) \leq \min \left(|E|,(M / t)^{q}\right)$ and for $T>0$,
		\EQN{
			\|f\|_{L^{p,q}}^{q}
			=
			\int_{0}^{\infty} t^{q-1} \omega(t)^{q/p}\,dt
			&\leq
			\int_{0}^{T} t^{q-1} |E|^{q/p}\,dt + \int_{T}^{\infty} t^{q-1} (\frac{M}{t})^r\,dt
			\\&=
			|E|^{q/p}\frac{T^q}{q} + M^r \frac{T^{q-r}}{r-q}.
		}
		Choosing $T=\frac{M|E|^{-\frac{q}{pr}}}{(r-q)^\frac{1}{r}}$, we get
		\EQN{
			\|f\|_{L^{p,q}}^{q}
			\lesssim
			\|f\|_{L^{r,\infty}}^{q} |E|^{-\frac{q}{p}(\frac{q}{r}-1)}.
		}
	\end{proof}
\end{comment}

 We are now ready to prove our global well-posedness result. 
	\begin{proof}[Proof of Theorem \ref{thrm.globalWell}]
  We first prove the case when $3\leq q<\I$. The case $q=\I$ will be given at the end of the proof.
		We  define a Kato-type space: 
		\[
		\| \cdot \|_K = \sup_{3< p<\I} \|\cdot \|_{K_p},
		\]
		where 
		\[
		\| u \|_{K_p}=\sup_{0<t<\I}  {\frac 1 p} t^{1/2-3/(2p)} \| u\|_{L^p}(t).
		\]
            { The appearance of $p^{-1}$ reflects the appearance of $p$ in the constants on the right-hand side of the final display in Remark \ref{Remark.oseen}. Note that to get estimates for \eqref{eq.PNS} with $U=0$ it suffices to only consider several of the Kato spaces $K_p$. It seems to treat the generalized case they must all be included as we will eventually need to estimate $\|u\|_{K_p}$ in terms of $\|u\|_{K_{2p}}$.} 
		
  Let $\| \cdot \|_X = \sup\{  \|\cdot\|_K ,\|\cdot\|_Y\}$, where $Y=L^\I((0,\I);L^{3,q})$.  Our strategy is to apply the fixed point theorem with $E=X$.

  \subsection{Bilinear estimates} 
For a value $\de>0$ which we will eventually specify, we write $ U(x,t) = U_{low}+ U_{high}$ where
		\[
		U_{high}= U \chi_{\{|U|\geq  \delta \sqrt t^{-1}  \}}.
		\]
		Let 
		\[
		S_t=\{|U|\geq   \delta \sqrt t^{-1} \}.
		\]
		Note that $\| U_{high} \|_{L^{3,\I}}\leq \| 
		U\|_{L^{3,\I}} $   and $|S_t| \leq \bigg( \frac {\sqrt t} {\delta} \bigg)^3 \| U_{high}\|_{L^{3,\I}}^3$.
		On the other hand, $\| U_{low}\|_{L^\I} (t) \leq   \delta \sqrt t^{-1} $.  
		Using this and Remark \ref{Remark.oseen} we see that
		\EQN{
			\| B(u,U_{low})\|_{L^p}(t)&\lesssim  {p} \delta \int_0^t \frac 1 {(t-s)^{1/2+ 3(1/3-1/p)/2} s^{1/2} }\|u\|_{L^{3,q}}(s)\,ds
			\\&\lesssim    {p}  \delta \int_0^t \frac 1 {(t-s)^{1-3/(p2)} s^{1/2} }\|u\|_{L^{3,q}}(s)\,ds
			\\&\lesssim  {p}  \delta t^{3/(2p)-1/2} \|u\|_{Y}.
		}
		Hence
		\EQ{\label{BUlowSub}
			\sup_{0<t<\I}t^{1/2-3/(2p)} {\frac 1 p}\| B(u,U_{low})\|_{L^p}(t) 
			&\lesssim \delta  \|u\|_{X},
		}
  which means
  \[\label{e.w07102}
  \|B(u,U_{low})\|_{K}
  \lesssim  \delta \|u\|_{X},
  \]
To estimate $\|B(u, U_{low})\|_{Y}$, we have  
  \EQ{\label{BUlowCrit}
		\| B(u,U_{low})\|_{L^{3,q}} \lesssim \delta\int_0^t \frac 1 {(t-s)^{1/2}} \| uU_{low}\|_{L^{3,q}}\,ds
  \lesssim \delta \int_0^t \frac 1 {(t-s)^{1}s^{1/2}} \| u\|_{L^{3,q}}\,ds.
  }
Thus, we see 
  \[\label{e.w07103}
		\| B(u,U_{low})\|_{Y} \lesssim \delta\|u\|_X.
		\]

  \begin{comment}
		If $q<8$, we have
        \begin{equation}\label{e.w08221}
            \begin{split}
                \| B(u,U_{high})\|_{L^{3,q}} 
                &\leq
                \int_{0}^{t} \frac{1}{(t-s)^{1/2+1/4}} \|U_{high}\|_{L^{8/3, q_1}}
                \|u\|_{L^{8,q_2}}
            \\&\leq
            CA
                \int_{0}^{t} \frac{1}{(t-s)^{1/2+1/4}}s^{1/16} 
                \|u\|_{L^{8,q_2}}
                \\&=
            CA
                \int_{0}^{t} \frac{1}{(t-s)^{3/4}s^{1/4}}s^{5/16} 
                \|u\|_{L^{8,q_2}}
                \\&\leq
            CA 
                \|u\|_{K_8}
                \leq 
                CA\|u\|_{X},
            \end{split}
        \end{equation}
        where we assumed $U(x,t) \in L^{\infty} (0,\infty; L^{3,\infty})$ bounded by a constant $A$, and $\frac{1}{q}=\frac{1}{q_1}+\frac{1}{q_2}$.
\end{comment}

We now turn our attention to the singular part of $U$.
Observe that  for $q\geq 3$, 
\EQ{
\| B(u,U_{high})\|_{L^{3,q}}  \lesssim  \| B(u,U_{high})\|_{L^{3}} &\lesssim \int_0^t \frac 1 {(t-s)^{3/4}} \| u\otimes U_{high}\|_{L^2} (s)\,ds
\\&\lesssim  {\frac 1 {10}} \int_0^t \frac 1 {(t-s)^{3/4}} \|u\|_{L^{10}}(s) \| U_{high}\|_{L^{5/2}}(s)\,ds.
}
Note that by \cite[Lemma 6.1]{Tsai2020},
\[
 \| U_{high}\|_{L^{5/2}}^{5/2}(s) \lesssim (\sqrt s)^{1/2}\delta^{-1/2} \| U\|_{L^{3,\I}}^{5/2} .
\]
Hence, 
\EQ{\label{BUhighCrit}
\| B(u,U_{high})\|_{L^{3,q}} &\lesssim \int_0^t \frac {\|u\|_{\mathcal K_{10}}} {(t-s)^{3/4} s^{1/2 - 3/(20)}   }  ((\sqrt s)^{1/2}\delta^{-1/2} \| U\|_{L^{3,\I}}^{5/2} )^{2/5}\,ds
\\& \lesssim \delta^{-1/5}\|U\|_{L^\I_tL^{3,\I}_x}  \|u\|_{ K_{10}} \int_0^t \frac 1 {(t-s)^{3/4}s^{1/4}}\,ds
\\&\lesssim \delta^{-1/5}\|U\|_{L^\I_tL^{3,\I}_x}  \|u\|_{X}.
}
Regarding estimates in $K$, observe that for $3<p<\I$,\footnote{Compared to Kato's original paper \cite{Katao1984}, we need to include the full range of $K_p$ norms $3<p<\I$ as we are only able to bound $K_p$ using $K_{2p}$. }
\EQ{
 {\frac 1 p} \| B(u,U_{high})\|_{L^p}(t)&\lesssim  {\frac 1 p}\int_0^t \frac 1 {(t-s)^{1/2 + 3(1/r-1/p)/2}}\|u\otimes U_{high}\|_{L^r}(s)\,ds
\\&\lesssim  {\frac 1 {2p}} \int_0^t \frac 1 {(t-s)^{1/2 + 3(1/r-1/p)/2}}\| U_{high}\|_{L^{\bar r}}(s)\|u\|_{L^{2p}}\,ds
}
where we will need $r<3$, $3(1/r-1/p)/2<1/2$ and  \[
\frac 1 r = \frac 1 {\bar r} + \frac 1 {2p}.
\]
Provided $\bar r<3$ we have 
		\[
		\int_{S_s} |U_{high}|^{\bar r}\lesssim  \|U_{high}\|_{L^{3,\I}}^{\bar r}|S_s|^{1-{\bar r}/3} = \|U_{high}\|_{L^{3,\I}}^{\bar r}\big(  {\sqrt s} / \delta\big)^{3-{\bar r}}.
		\]
Hence,
\EQ{
& {\frac 1 p}\| B(u,U_{high})\|_{L^p}(t)\lesssim {\frac 1 {2p}} \int_0^t \frac 1 {(t-s)^{1/2 + 3(1/r-1/p)/2}} \|U_{high}\|_{L^{3,\I}}^{}\big(  {\sqrt s} / \delta\big)^{3/\bar r-{1}}  \|u\|_{L^{2p}}(s)\,ds 
\\&\lesssim \delta^{1-3/\bar r}   \|u\|_{K_{2p}}  \|U \|_{L^\I_t L^{3,\I}_x}\int_0^t \frac {s^{-1/2 + 3(1/r-1/(2p))/2}} {(t-s)^{1/2 + 3(1/r-1/p)/2}s^{1/2-3/(4p)}}     \,ds
\\&\lesssim \delta^{1-3/\bar r}   \|u\|_{X}  \|U \|_{L^\I_t L^{3,\I}_x} t^{3/(2p)-1/2},
}
implying
\EQ{\label{BUhighSub}
\| B(u,U_{high})\|_{K_p}\lesssim \delta^{1-3/\bar r}   \|u\|_{X}  \|U \|_{L^\I_t L^{3,\I}_x}.
}
For the preceding argument to make sense we needed to have 
\[
r<3; \quad 3(1/r-1/p)<1/2; \quad \bar r<3. 
\]
The middle condition and last condition are met provided
\[
\frac {3p} {p+3}< r < \frac {6p} {2p+3}.
\]
As  $\frac {3p} {p+3}<\frac {6p} {2p+3}<3$ for all $3<p<\I$, we can always choose an appropriate $r\in (3/2,3)$. At this stage we have confirmed that
\[
\| B(u,U_{high})\|_{X} \lesssim  (\delta^{1-3/\bar r} + \delta^{-1/5})   \|u\|_{X}  \|U \|_{L^\I_t L^{3,\I}_x}.
\]
Note that since $\bar r\in (3/2,3)$, assuming $\de<1$, the dependence on $\bar r$ can be eliminated above and we obtain 
\[
\| B(u,U_{high})\|_{X} \lesssim  \delta^{-1} \|u\|_{X}  \|U \|_{L^\I_t L^{3,\I}_x}.
\]
So, by first taking $\delta$ small and basing a smallness condition on $ \|U \|_{L^\I L^{3,\I}}$ in terms of $\delta$ and universal constants, we can ensure that 
\[
\| B(u,U)\|_{X} \leq \frac 1 {16} \| u\|_X.
\]
By a symmetric argument we have the same bound for $\| B(U,u)\|_{X} $.

We have explicitly worked out the bilinear estimates for the terms involving $U$ but have not mentioned $B(u,u)$.  {Inspecting the estimates above, we may replace $B(u,U)$ with $B(u,u)$ and set $\delta=1$ to obtain
\[
\| B(u,u)\|_{X} \lesssim \| u\|_X \|u\|_{L^\I_t L^{3,\I}_x}\lesssim \| u\|_{X}^2,
\]
where we have used the continuous embedding $L^{3,q}\subset L^{3,\I}$.}
\begin{comment}
This term appears in the well-posedness theory for the standard Navier-Stokes equations and is thus well understood. As we are 
For $K$ we observe that
\EQ{
 {\frac 1 p}\| B(u,u)\|_{L^p}(t)&\lesssim  {\frac 1 p}\int_0^t \frac 1 {(t-s)^{1/2+3/(2p)}} \| u\otimes u\|_{L^{p/2}}(s)\,ds
\\&\lesssim  {\frac 1 p}\int_0^t \frac 1 {(t-s)^{1/2+3/(2p)}s^{1-3/p}} s^{1-3/p}  \|u\|_{L^p}^2(s)\,ds
\\&\lesssim t^{3/(2p)-1/2}\| u\|_{K_p}^2.
}
\crp{
\EQ{
 {\frac 1 p}\| B(u,u)\|_{L^p}(t)&\lesssim \crr{\frac 1 p}\int_0^t \frac 1 {(t-s)^{1/2+3/(2p)}} \| u\otimes u\|_{L^{p/2}}(s)\,ds
\\&\lesssim  {\frac 1 p}\int_0^t \frac 1 {(t-s)^{1/2+3/(2p)}s^{1-3/p}} s^{1-3/p}  \|u\|_{L^p}^2(s)\,ds
\\&\lesssim t^{3/(2p)-1/2}\| u\|_{K_p}^2.
}
}
For $Y$ we observe that 
\EQ{
\| B(u,u)\|_{L^{3,q}}(t)\leq \| B(u,u)\|_{L^{3}}(t) 
&\lesssim  {\frac 1 {6}}\int_0^t \frac 1 {(t-s)^{1/2}s^{1/2}}  s^{1/2}\| u\|_{L^6}^2(s) \,ds
\\&\lesssim \|u\|_{K_6}^2 \int_0^t \frac 1 {(t-s)^{1/2}s^{1/2}}\,ds.
}
Taken together we see that 
\[
\| B(u,u)\|_{X}(t) \lesssim \| u\|_{X}^2.
\]
\end{comment}
The suppressed constant in the preceding estimate becomes $C_B$ in Proposition \ref{prop.fp}. At this stage, we have confirmed that by  requiring  $A \geq  \|U\|_{L^\I_tL^{3,\I}_x}$ to be small, we can apply Proposition \ref{prop.fp} to obtain a unique solution $u$ to \eqref{eq.PNS} which is in $X$.

\subsection{Time continuity.} We now show continuity in time of the  $L^{3,q}$ norms of $u$ for $t>0$. Continuity at $t=0$---i.e.~convergence to the initial data---will be addressed after this. We follow the approach in \cite{Tsai-book}, which is based on \cite{Katao1984}.
    %and set $\| \cdot \|_{E_T} := \|\cdot \chi_{(0,T]} \|_{X}$.

We begin by establishing continuity of the caloric extension of $u_0$. Taking $t,t_1>0$,   we want to control
   $e^{t\Delta}u_0-e^{t_1\Delta}u_0$ in $L^{3,q}$. We have as $t\rightarrow 0$
  $$
\left(e^{t \Delta} f-f\right)(x)=\int_{\mathbb{R}^d} e^{-|z|^2 / 4} g(x, z, t) d z, \quad g(x, z, t)=f(x-\sqrt{t} z)-f(x).
$$
By Proposition \ref{heat} and Minkowski's integral inequality in \cite{Mandel},
$$
\left\|e^{t \Delta} f-f\right\|_{L^{p,q}} 
\leq 
\int_{\mathbb{R}^d} e^{-|z|^2 / 4}\|g(\cdot, z, t)\|_{L^{p,q}} dz \rightarrow 0,
$$
 where we emphasize that $p,q<\I$---this fails when $p<\I$ and $q=\I$.
Next, we see by Young's convolution inequality in Lemma \ref{Lemma_young},
\begin{equation}\label{e.w08231}
\begin{split}
    \left\|e^{(t+h) \Delta} f-e^{t \Delta}f\right\|_{L^{3,q}}
    &\leq
    \|(4 \pi (t+h))^{-3 / 2} e^{-x^2 / 4 (t+h)}-(4 \pi t)^{-3 / 2} e^{-x^2 / 4 t}
    \|_{L^{1}}
    \|f\|_{L^{3,q}}
    \rightarrow 0,
\end{split}
\end{equation}
as $h\rightarrow 0$.

 % Next, for the time continuity of $B(u,U)(t)$, we have  {ZB: in paper with lai and Tsai on page 33 we have a third term and introduce a parameter $\ga$. I think what you've written assumes $t_1<t$ and therefore you are proving \textbf{left} continuity. To get $t_1\to t$ from either side, we can include the extra term in my paper with lai and Tsai.} 

  For the time continuity of the Duhamel terms, we have 
  \begin{equation}
  \begin{split}
  B(u,U)(t)-B(u,U)(t_1)
  &=
  \int_0^t e^{(t-s)\Delta}\mathbb P \nb \cdot (u\otimes U)\,ds
  -
  \int_0^{t_{1}} e^{(t_1-s)\Delta}\mathbb P \nb \cdot (u\otimes U)\,ds
  \\&=
  \int_{0}^{\rho t_1}
  \left(e^{(t-\rho t_1)\Delta}-e^{(t_1-\rho t_1)\Delta}\right)e^{(\rho t_1-s)\Delta}\mathbb P \nb \cdot (u\otimes U)
  \,ds
  \\&\quad+
  \int_{\rho t_1}^t e^{(t-s)\Delta}\mathbb P \nb \cdot (u\otimes U)\,ds
    -
  \int_{\rho t_1}^{t_{1}} e^{(t_1-s)\Delta}\mathbb P \nb \cdot (u\otimes U)\,ds
    \\&=
    I_1 + I_2 +I_3,
  \end{split}
  \end{equation}
  where we take $\rho\in (0,1)$ so that $\rho t_1 <t$ and let $t_1$ be fixed, and will let $t$ approach $t_1$ from either side. Introducing $\rho$ allows us to prove left and right continuity simultaneously.

  To estimate $I_1$, we again use our decomposition of $U=U_{high}+U_{low}$, this time taking $\delta=1$.
  For $I_1$, we have  
  \begin{equation}
  \begin{split}
  \|I_1\|_{L^{3,q}}
 % &\leq
%  \int^{\rho t_1}_{0} 
%  \left\|\left(e^{(t-\rho t_1)\Delta}-e^{(t_1-\rho t_1)\Delta}\right)e^{(\rho t_1-s)\Delta}\mathbb P \nb \cdot (u\otimes U)\right\|_{L^{3,q}}
%  \,ds
%  \\
  &\leq
  \int^{\rho t_1}_{0} 
  \left\|\left(e^{(t-\rho t_1)\Delta}-e^{(t_1-\rho t_1)\Delta}\right)e^{(\rho t_1-s)\Delta}\mathbb P \nb \cdot (u\otimes U_{high})\right\|_{L^{3,q}}
  \\&\quad+
  \int^{\rho t_1}_{0} 
  \left\|\left(e^{(t-\rho t_1)\Delta}-e^{(t_1-\rho t_1)\Delta}\right)e^{(\rho t_1-s)\Delta}\mathbb P \nb \cdot (u\otimes U_{low})\right\|_{L^{3,q}}
  \\&=
  I_{11} + I_{12}.
  \end{split}
  \end{equation}
For $I_{12}$, observe that for $s$, $\rho$ and $t_1$ fixed,  we have $(u \otimes U_{low})(s) \in L^{3,q}$ and so, by \eqref{Remark.oseen}, \[e^{(\rho t_1-s)\Delta}\mathbb P \nb \cdot[(u\otimes U_{low})(\tau)]  \in L^{3,q}.\] 
  Upon sending $t\to t_1$, %we use the fact that $\| U_{low}\|_{L^\I}(s) = \sqrt s^{-1}$ and get
%  \begin{equation}
 % \begin{split}
%  \|I_{12}\|_{L^{3,q}}
%  &\leq 
%  \int^{\rho t_1}_{0} 
%  \left\|\left(e^{(t-\rho t_1)\Delta}-e^{(t_1-\rho t_1)\Delta}\right)e^{(\rho t_1-s)\Delta}\mathbb P \nb \cdot (u\otimes U_{low})\right\|_{L^{3,q}}\,ds.
%  \end{split}
%  \end{equation}
this fact and
and 
\eqref{e.w08231} imply that
\begin{equation}
  \begin{split}
  \left\|\left(e^{(t-\rho t_1)\Delta}-e^{(t_1-\rho t_1)\Delta}\right)e^{(\rho t_1-s)\Delta}\mathbb P \nb \cdot (u\otimes U_{low})\right\|_{L^{3,q}}
  \rightarrow
  0.
  \end{split}
  \end{equation}
This amounts to pointwise convergence as $t\to t_1$ for $s\in (0,\rho t_1)$. 
We further have 
\begin{equation}
  \begin{split}
  &\left\|\left(e^{(t-\rho t_1)\Delta}-e^{(t_1-\rho t_1)\Delta}\right)e^{(\rho t_1-s)\Delta}\mathbb P \nb \cdot (u\otimes U_{low})(s)\right\|_{L^{3,q}}
  \\&\lesssim 
  \bigg( \frac {1} {(t-s)^{1/2}}   + \frac {1} {(t_1-s)^{1/2}}\bigg)s^{-1/2} \|   u\|_{L^\I(0,\I;L^{3,q})} \in L^1(0,\rho t_1).
  \end{split}
  \end{equation}
Applying the dominated convergence theorem now implies that $I_{12}\to 0$ as $t\to t_1$.
The argument for $I_{11}$ is identical once we observe that $L^{3,q}$ embeds continuously in $L^3$ and 
\[
\|  e^{(\rho t_1-s)\Delta}\mathbb P \nb \cdot [(u\otimes U_{high})(s)] \|_{L^3} \lesssim \frac 1 {(\rho t_1 -s)^{3/4}s^{1/2}} \| u\|_{\mathcal K_\I} \| U_{high}\|_{L^2}(s) <\I.  
\]
Hence, $e^{(\rho t_1-s)\Delta}\mathbb P \nb \cdot[(u\otimes U_{high})(\tau)]  \in L^{3,q}$.

  For $I_2$, we again use our decomposition of $U=U_{high}+U_{low}$, beginning with 
  \begin{equation}
  \begin{split}
  \|I_{2}\|_{L^{3,q}}
 % &\leq
 % \int_{\rho t_1}^{t} \|\nabla e^{(t_{1}-s)\Delta}\mathbb P  (u\otimes U)\|_{L^{3,q}}\,ds
 % \\
  &\leq
  \int_{\rho t_1}^{t} \|\nabla e^{(t_{1}-s)\Delta}\mathbb P  (u\otimes U_{high})\|_{L^{3,q}}\,ds
  +
  \int_{\rho t_1}^{t} \|\nabla e^{(t_{1}-s)\Delta}\mathbb P  (u\otimes U_{low})\|_{L^{3,q}}\,ds
  \\&=
  I_{21} + I_{22}.
  \end{split}
  \end{equation}
  For $I_{22}$, we use the fact that $\| U_{low}\|_{L^\I} (s)= \sqrt s^{-1}$ and get
  \begin{equation}
  \begin{split}
  I_{22}
  &\leq
  \int_{\rho t_1}^{t} 
  s^{-1/2}(t-s)^{-1/2}
  \|u\|_{L^{3,q}}
  \,ds
  \\&\leq
   \|u\|_{L^\I(0,\I;L^{3,q})}
   \int_{\rho t_1}^{t} 
  s^{-1/2}(t-s)^{-1/2}
  \,ds
  \\&\lesssim (\rho t_1)^{-1/2} (t-\rho t_1)^{1/2},
  \end{split}
  \end{equation}
  which can be made small  by taking $\rho$ close to $1$ and $t$ close to $t_1$.
For $I_{21}$ we have 
  \begin{equation}
  \begin{split}
  I_{21}
  &\leq 
  \int_{\rho t_1}^{t} \|\nabla e^{(t_{1}-s)\Delta}\mathbb P  (u\otimes U_{high})\|_{L^{3}}\,ds
  \\&\lesssim \int_{\rho t_1}^{t}
  (t-s)^{-3/4}
   \| u\|_{L^{10}} \| U_{high}\|_{L^{5/2}}\,ds
  \\&\lesssim
  \|u\|_{K_{10}}(\rho t_1)^{-7/20}\|U\|_{L^{3,\I}}\int_{\rho t_1}^{t} 
  (t-s)^{-3/4} s^{1/5} 
\,ds,
  \end{split}
  \end{equation}
 which can also be made small by taking $\rho$ close to $1$ and $t$ close to $t_1$.
The estimates for $I_3$ are essentially the same as those for $I_2$ and we omit them. The estimates for $B(u,u)(t)- B(u,u)(t_1)$ and $B(U,u)(t)- B(U,u)(t_1)$ are also similar and are omitted. 
Taken together, these bounds imply time-continuity in $L^{3,q}$ by first taking $\rho$ close to $1$ and then taking $t$ close to $t_1$.

\medskip 
 
{We now prove continuity at $t=0$. For this we use an inductive argument involving the Picard iterates $e_n$ in the proof of Proposition \ref{prop.fp}. We first observe that if $u_0\in L^{3,q}$ where $q<\I$, then $e_0= e^{t\Delta}u_0\to u_0$ in $L^{3,q}$ as $t\to 0^+$ and $t^{1/2-3/(2p)}\|e_0\|_{L^p}\to 0$ for $p\in (3,\I)$ as $t\to 0^+$---these properties fail when $q=\I$. 
Next, suppose these properties hold for $e_n$. We will show they also hold for $e_{n+1}$. Inspecting \eqref{BUlowSub}, \eqref{BUlowCrit}, \eqref{BUhighCrit} and \eqref{BUhighSub}, we can see that
\[
\|B(U,e_n) \chi_{(0,T]}\|_{X} \lesssim_{u_0,U} \|e_n \chi_{(0,T]}\|_{X}\to 0\text{ as } T\to 0^+,
\]
by assumption.
Recalling that $E$ in Proposition \ref{prop.fp} is what we are have  presently labeled $X$, we have that $e_n\to u$ in $X$ as $n\to \I$. This implies $e_n \chi_{(0,T]} \to u\chi_{(0,T]}$ in $X$. We therefore have that 
\[
\| (u-e_0) \chi_{(0,T]} \|_X \leq \|( u- e_n)\chi_{(0,T]} \|_X  + \| (e_n-e_0)\chi_{(0,T]}\|_{X}. 
\]
For any $\e>0$, we can make the right-hand side of the above small by first choosing $n$ large and then choosing $T$ small. Since $e_0\to u_0$ in $L^{3,q}$ as $t\to 0^+$, it follows from the above display that $u\to u_0$ in $L^{3,q}$.
}

\subsection{Modifications when $q=\I$.}
   {We now modify this argument for the case of $L^{3,\I}$ data. 
The fixed point argument is actually easier than in the case of   $L^{3,q<\I}$ data as we do not need to involve the Kato classes (although we could if we wanted to). Indeed, in \cite[Lemma 23]{Meyer}, Meyer shows that if $Z=L^{3,\I}$ and $E=L^\I(([0,\I);Z))\cap C([0,\I);Z)$, then  $B(\cdot,\cdot)$ is continuous from $E\times E$ into $E$. 
Since $U$ is in $Z=L^{3,\I}$ uniformly in time, there is no work to be done to conclude that 
\[
\| B(e,U) \|_E + \|B(U,e)\|_{E}\leq C_B \| e\|_E \|U\|_E,
\]
and so we can  apply Proposition \ref{prop.fp} to obtain the solution $u$ provided $\|U\|_E$ and $\|u_0\|_{L^{3,\I}}$ are small.

}
 	\end{proof}

\section{Asymptotic stability, or not}\label{sec:AS}

In this section we first prove Theorem \ref{thrm.asymptoticStability} and then prove Theorem \ref{thrm.excluded}.

Our proof of asymptotic stability, i.e.~Theorem \ref{thrm.asymptoticStability}, is based on re-framing the $L^3$-asymptotic stability problem in terms of the $L^2$-asymptotic stability theory of Karch et. al. Essentially, we will view a small element of $L^{3,q}$ as something that can be decomposed into an $L^{3,q}$ part---the tail--- and an $L^2$ part---the head. We can make the tail as small as we like and, applying Theorem \ref{thrm.globalWell} to it, we end up with a solution to the $U$-perturbed Navier-Stokes equations that is as small as we like. The $L^2$ part, evolving from the head, is now accounted for by the $L^2$-asymptotic stability of \cite{karch}.

\begin{proof}[Proof of Theorem \ref{thrm.asymptoticStability}] 
Let $\e>0$ be given. Suppose that $\|U\|\leq \e_1/{2}$ and $\|   u_0\|_{L^{3,q}}\leq \e_2 /2$. 
    Re-write $u_0$ as $\td u_0 + \bar u_0$ and assume $\| \bar u_0\|_{L^{3,q}} <\min\{ \e_1 / (2C), \e/(2C), \e_2/2\}$ while $\td u_0\in L^2$ and both are divergence free. This is done using the fact that $C_{c,\si}^\I$ is dense in $L^{3,q}_\si$ and, by definition, also in $\overline{C_{c,\si}^\I}^{L^{3,\I}}$. In particular, in our splitting $\td u_0\in C_{c,\si}^\I$.
    Let $\bar u$ solve \eqref{eq.PNS} with data $\bar u_0$ and perturbation term $U$; this comes from Theorem \ref{thrm.globalWell}. In particular we have 
    \[ \| \bar u \|_{L^\I(0,\I;L^{3,q})} <\min\{ \e_1/2, \e/2\}.
    \]
    Then, consider $\td u$ as  a solution to \eqref{eq.PNS} with initial data $\td u_0 = u_0 - \bar u_0$  with $U$ in \eqref{eq.PNS} replaced by $(U + \bar u)$. 
    Noting that \[\|U+\bar u\|_{L^\I(0,\I;L^{3,\I})}\leq \|U\|_{L^\I(0,\I;L^{3,\I})}+ \|\bar u\|_{L^\I(0,\I;L^{3,q})} \leq \e_1, \]
    and
    \[
    \| \td u_0\|_{L^{3,q}}\leq \| u_0\|_{L^{3,q}}+ \|\bar u_0\|_{L^{3,q}}\leq \e_2,
    \]
    we can still use Theorem \ref{thrm.globalWell} to solve for $\td u$. But $\td u_0$ is also chosen so that $\td u_0\in L^2$. Hence, the Karch et. al. theory \cite{karch} applies and generates a time-global Leray solution which must equal $\td u$ by weak-strong uniqueness, Theorem \ref{thrm.weakstronguniqueness}.  By Theorem \ref{thrm.karch}, we see that 
    \[
        \int_s^{s+1} \| \nb \td u\|_{L^2}^2 \,d\tau  \to 0
    \]
    as $s\to \I$, at least for almost every $s$. Noting that $\dot H^1$ embeds continuously into $L^6$, and since we can interpolate $L^3$ between $L^6$ and $L^2$, there must exist a time $t_0$ at which  $\| \td u\|_{L^{3,q}}(t_0)\leq \| \td u\|_{L^{3}}(t_0)\leq \min\{ \e_2/C,\e/(2C)\}$, where we have also used the continuity of the embedding $L^3\subset L^{3,q}$. We may now apply Theorem \ref{thrm.globalWell} a third time to conclude that $\sup_{t_0 <t <\I} \| \td u \|_{L^{3,q}}<\e/2$. 
    Consequently
    \[
    \sup_{t_0<t<\I} \|u \|_{L^{3,q}}<\e.
    \]
    Since $\e$ was arbitrary we conclude that 
    \[
    \lim_{t\to \I} \| u\|_{L^{3,q}} =0.
    \] 
\end{proof}

\begin{comment}
\begin{theorem}[Excluded endpoint]\label{thrm.excluded}
    Let $U = u_L(x)$, a Landau solution with small norm. Then, for any $\epsilon\geq 0$ there exists $w_0\in L^{3,\I}$ with $\|w_0\|_{L^{3,\I}}<\e$ so that 
    \[
    \inf_{0<t<\I }\|w \|_{L^{3,\I}} = B,
    \]
    for some  $B>0$. In other words, asymptotic stability around small Landau solutions fails in $L^{3,\I}$.
\end{theorem}
\end{comment}

We now prove that Theorem \ref{thrm.asymptoticStability} is not true if $L^{3,q}$ is replaced by $L^{3,\I}$ with no further stipulations. Note that a solution $u$ is self-similar if it satisfies $u(x,t)=\la u(\la x ,\la^2 t)$ for every $\la>0$ and it is discretely self-similar if this possibly only holds for some $\la$. The initial data is self-similar or discretely self-similar if the preceding scaling relation holds with the time variable omitted. If we do not care that we are perturbing around a Landau solution, then the proof of Theorem \ref{thrm.asymptoticStability} is simple: We just take $u_0$ and $v_0$ to be self-similar, divergence free with small difference in $L^{3,\I}$.
The ensuing self-similar solutions $u$ and $v$ then confirm Theorem \ref{thrm.asymptoticStability} because the $L^{3,\I}$-norm of their difference is scaling invariant and hence does not decay as $t\to\I$. The same basic idea applies when the background flow is a Landau solution because it is also scaling invariant.    
Note that we cannot merely take the second solution to be another Landau solution  because, if two Landau solutions differ in $L^{3,\I}$ seminorm, then they have different forces and therefore the equation for their difference also has a forcing term. 

\begin{proof}[Proof of Theorem \ref{thrm.excluded}]
Let $U$ be  a Landau solution with small enough norm for Theorem \ref{thrm.globalWell} to apply.   We will understand this as a function of $x$ and $t$ where $U(x,t)= U(x)$. 

We recall a general fact about discretely self-similar vector fields:   $u_0\in L^{3,\I}\cap DSS$ if and only if $u_0\in L^3_\loc(\R^3\setminus \{0\})\cap DSS$ \cite{BT1}. To be more precise, in \cite{BT1} Tsai and Bradshaw showed that, if $u_0$ is $\la$-DSS then
\[
\int_{1\leq |x|\leq \la} |u_0|^3 \,dx\leq 3(\la-1)^2 \| u_0\|_{L^{3,\I}}^3,
\]
and 
\[
\|u_0\|_{L^{3,\I}}^3 \leq \frac {\la^3}{3(\la-1)} \int_{1\leq |x|\leq \la}|u_0|^3 \,dx,
\]
see \cite[(3.5) and (3.7)]{BT1}.
Let $\td u_0\in L^3$ satisfy $u_0 \in C_{c,\sigma}^\I (\{x:1\leq |x|\leq \la \})$ and $\| \td u_0\|_{L^3} = M>0$. Let $u_0$ be the $\la$-DSS extension of $\td u_0$ to $\R^3\setminus \{0\}$. Then, $u_0$ is divergence free in a distributional sense and $\|u_0\|_{L^{3,\I}} \sim_\la M$.   In this way we can construct discretely self-similar initial data of arbitrarily small size.

 Let $\e>0$ be given. Without loss of generality we take this less than $\e_2$ in Theorem \ref{thrm.globalWell} and  less than $ \frac{\| U\|_{L^{3,\I}}} 2$. Let $u_0$ be chosen so that $\| u_0\|_{L^{3,\I}} \leq \frac \e C$ where $C$ is as in Theorem \ref{thrm.globalWell}.
By Theorem \ref{thrm.globalWell}, there exists  a unique solution $u$  to \eqref{eq.PNS} with perturbation term $U$ and data $u_0$.  
Note that $u_0$ is discretely self-similar, as is $U$. Since $u$ is unique, by re-scaling we must have that $u$ is also discretely self-similar. 
Since we know that $u$ converges in a weak sense to $u_0$,   we cannot have $u=0$ in $L^{3,\I}$ for all positive times. In particular, there exists $t$ so that $u(\cdot,t)\neq 0$ in $L^{3,\I}$. But then by discretely self-similar scaling, $\| u(\cdot,\la^{2k} t)\|_{L^{3,\I}}= \|u(\cdot,t)\|_{L^{3,\I}} \neq 0$ in $L^{3,\I}$ for any $k\in \Z$. In particular,
\[
\limsup_{t\to \I} \|u\|_{L^{3,\I}}\geq \| u(\cdot, t)\|_{L^{3,\I}}.
\]
 
\end{proof}

\section*{Acknowledgements}

The research of Z.~Bradshaw was supported in part by the  NSF via grant DMS-2307097 and the  Simons Foundation via a TSM grant.  

W.~Wang was supported in part by the Simons Foundation via a TSM grant (No. 00007730), and he would like to thank Xukai Yan for discussions on Landau solutions.

	\bibliographystyle{siam}
	\bibliography{ref}

\bigskip\noindent 
Zachary Bradshaw, Department of Mathematics, University of Arkansas, Fayetteville, AR,  USA;
e-mail: \url{zb002@uark.edu}

\medskip\noindent 
Weinan Wang, Department of Mathematics, University of Oklahoma, Norman, OK, USA;
 e-mail: \url{ww@ou.edu}

\end{document}